\let\origsection=\section %save orignal definition
\def\section{\@ifstar{\origsection*}{\mysection}}
\def\mysection{\@startsection{section}{1}\z@{.7\linespacing\@plus\linespacing}{.5\linespacing}{\normalfont\scshape\centering\S}}
\newcommand{\sP}{\mathcal{P}}
\newcommand{\sQ}{\mathcal{Q}}
\newcommand{\sR}{\mathcal{R}}
\newcommand{\Min}[1]{\min \{#1\}}
\newcommand{\sC}{\mathcal{C}}
\newcommand{\bs}{\backslash}
\newcommand{\set}[1]{\{#1\}}
\newcommand{\sB}{\mathcal{B}}
\newtheorem{theorem}{Theorem}[section]
\newtheorem{corollary}{Corollary}[section]
\newtheorem{lemma}{Lemma}[section]
\newtheorem{conjecture}{Conjecture}[section]
\newtheorem*{conjecture-linial}{Linial's Conjecture~\cite{Linial1981}}
\newtheorem*{conjecture-dual-linial}{Linial's Dual Conjecture~\cite{Linial1981}}
\newtheorem*{conjecture-berge}{Berge's Conjecture~\cite{Berge1982}}
\newtheorem*{conjecture-ahh}{Aharoni-Hartman-Hoffman's Conjecture~\cite{AharoniEtAl1985}}
\newcounter{factcount}
\newcounter{casecount}
\newcounter{casefactcount}
\newcommand{\Compl}[1]{\overline{#1}}
\begin{document}

\title[Berge's and Aharoni-Hartman-Hoffman's Conjecture for in-semicomplete digraphs]{Berge's Conjecture and Aharoni-Hartman-Hoffman's Conjecture for locally in-semicomplete digraphs}

\author[
Sambinelli
\and Negri Lintzmayer
\and Nunes da Silva
\and Lee
]{Maycon Sambinelli \and Carla Negri Lintzmayer \and Cândida Nunes da Silva \and Orlando Lee}

\address{Institute of Computing, University of Campinas, Campinas, São Paulo, Brazil}
\email{\{msambinelli|carlanl|lee\}@ic.unicamp.br}

\address{Department of Computing, Federal University of São Carlos, Sorocaba, São Paulo, Brazil}
\email{candida@ufscar.br}

\thanks{%
M. Sambinelli is supported by CNPq (Proc. 141216/2016-6),
C. N. Lintzmayer by FAPESP (Proc. 2016/14132-4), and
O. Lee by CNPq (Proc. 311373/2015-1) and FAPESP (Proc. 2015/11937-9).
}

\begin{abstract}
  Let $k$ be a positive integer and let $D$ be a digraph.
  A \emph{path partition} $\sP$ of $D$ is a set of vertex-disjoint paths which covers $V(D)$. 
  Its \emph{$k$-norm} is defined as $\sum_{P \in \sP} \Min{|V(P)|, k}$.
  A path partition is \emph{$k$-optimal} if its $k$-norm is minimum among all path partitions of $D$.
  A \textit{partial $k$-coloring} is a collection of $k$ disjoint stable sets. 
  A partial $k$-coloring $\sC$ is \emph{orthogonal} to a path partition $\sP$ if each path $P \in \sP$ meets $\min\{|P|,k\}$ distinct sets of $\sC$.
  Berge (1982) conjectured that every $k$-optimal path partition of $D$ has a partial $k$-coloring orthogonal to it.
  A \emph{(path) $k$-pack} of $D$ is a collection of at most $k$ vertex-disjoint paths in $D$.
  Its \emph{weight} is the number of vertices it covers.
  A $k$-pack is \emph{optimal} if its weight is maximum among all $k$-packs of $D$.
  A \emph{coloring} of $D$ is a partition of $V(D)$ into stable sets.
  A $k$-pack $\sP$ is \emph{orthogonal} to a coloring $\sC$ if each set $C \in \sC$ meets $\Min{|C|, k}$ paths of $\sP$.
  Aharoni, Hartman and Hoffman (1985) conjectured that every optimal $k$-pack of $D$ has a coloring orthogonal to it.
  A digraph $D$ is \emph{semicomplete} if every pair of distinct vertices of $D$ is adjacent. 
  A digraph $D$ is \emph{locally in-semicomplete} if, for every vertex $v \in V(D)$, the in-neighborhood of $v$ induces a semicomplete digraph.
  \emph{Locally out-semicomplete} digraphs are defined similarly.
  In this paper, we prove Berge's and Aharoni-Hartman-Hoffman's Conjectures for locally in/out-semicomplete digraphs.
\end{abstract}

\maketitle

% \begin{keyword}
%   Path partition \sep coloring \sep Berge's Conjecture \sep Aharoni, Hartman, and Hoffman's Conjecture \sep locally in-semicomplete digraphs
% \end{keyword}

\section{Introduction}

The digraphs considered in this text do not contain loops or parallel arcs, but may contain cycles of length two. 
Let $D$ be a digraph.
We denote the vertex set of $D$ by $V(D)$ and its arc set by $A(D)$. 
If $u$ and $v$ are vertices of $D$, then we denote the arc with tail in $u$ and head in $v$ by $uv$.
Vertices $u$ and $v$ are \textit{adjacent} in~$D$ if $uv \in A(D)$ or $vu \in A(D)$; otherwise they are \textit{nonadjacent}.
The \emph{neighborhood}, \emph{in-neighborhood}, and \emph{out-neighborhood} of a vertex $v \in V(D)$ are the sets $\{u \in V(D) \colon uv \in A(D) \text{ or } vu \in A(D)\}$, $\{u \in V(D) \colon uv \in A(D)\}$, and $\{u \in V(D) \colon vu \in A(D)\}$, respectively.

A \textit{path} in $D$ is a nonempty sequence of distinct vertices $P = v_1v_2 \ldots v_\ell$ such that $v_iv_{i+1} \in A(D)$ for $1 \leq i < \ell$.
We define $V(P) = \{v_1, v_2, \ldots, v_\ell\}$ and $e(P) = v_\ell$.
The \emph{order} of $P$, denoted by $|P|$, is equal to $\ell$ and a path is \emph{trivial} if its order is one.
We denote the order of a longest path in $D$ by $\lambda(D)$.
For a set $\sP$ of vertex-disjoint paths of $D$, we define $V(\sP) = \cup_{P \in \sP} V(P)$.

A set $S$ of vertices of $D$ is \textit{stable} if all of its vertices are pairwise nonadjacent.
The \emph{stability number} of $D$, denoted by $\alpha(D)$, is equal to the cardinality of a maximum stable set of $D$.
A \textit{path partition} of $D$ is a set of vertex-disjoint paths of $D$ which covers $V(D)$. 
A path partition $\sP$ of $D$ is \emph{optimal} if $|\sP|$ is minimum among all possible path partitions of $D$.
The cardinality of an optimal path partition of $D$ is denoted by $\pi(D)$.
In 1950, Dilworth~\cite{Dilworth1950} proved the following result.

\begin{theorem}[Dilworth~\cite{Dilworth1950}]\label{the:dilworth}
  For every transitive acyclic digraph $D$, we have $\pi(D) = \alpha(D)$.
\end{theorem}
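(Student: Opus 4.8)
The plan is to read Theorem~\ref{the:dilworth} as Dilworth's chain-covering theorem in disguise. Since $D$ is transitive and acyclic, writing $u < v$ for $uv \in A(D)$ defines a strict partial order on $V(D)$; the paths of $D$ are exactly the chains, a path partition is a chain partition, $\pi(D)$ is the least number of chains needed to cover $V(D)$, and a stable set is precisely an antichain, so $\alpha(D)$ is the size of a largest antichain. The statement is then ``minimum chain cover $=$ maximum antichain,'' which I would establish by proving the two inequalities separately.

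First I would record the easy inequality $\pi(D) \ge \alpha(D)$. By transitivity, a path $P = v_1 v_2 \cdots v_\ell$ satisfies $v_i v_j \in A(D)$ for all $i < j$, so any two vertices of $P$ are adjacent; hence a stable set meets $V(P)$ in at most one vertex. If $\sP$ is a path partition and $S$ is a maximum stable set, then $\alpha(D) = |S| = \sum_{P \in \sP} |V(P) \cap S| \le |\sP|$, and minimizing over $\sP$ gives $\pi(D) \ge \alpha(D)$.

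The content is the reverse inequality, i.e.\ that $D$ has a path partition into exactly $n := \alpha(D)$ paths; I would prove this by induction on $|V(D)|$, the case $V(D) = \emptyset$ being trivial. Assume $V(D) \neq \emptyset$, so $n \ge 1$, and fix an inclusionwise maximal path $C$ of $D$. If $\alpha(D - C) \le n - 1$, then by induction the induced subdigraph on $V(D) \setminus V(C)$ has a path partition into at most $n-1$ paths, and adjoining $C$ yields a path partition of $D$ into at most $n$ paths, hence exactly $n$ by the easy inequality. Otherwise there is a stable set $A = \{a_1, \dots, a_n\}$ of $D - C$, which is then also a maximum stable set of $D$. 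Let $D^-$ be the set of vertices $v$ with $v = a_i$ or $v a_i \in A(D)$ for some $i$, and define $D^+$ symmetrically with the arcs $a_i v$. Using transitivity, acyclicity, and the maximality of $A$ one verifies that $D^- \cup D^+ = V(D)$ and $D^- \cap D^+ = A$; moreover $e(C) \notin D^-$ and the first vertex of $C$ is not in $D^+$, since otherwise transitivity would let us extend $C$ by some $a_i$, contradicting maximality. Hence $D^-$ and $D^+$ are proper subsets of $V(D)$, and since $A \subseteq D^- \subseteq V(D)$ we get $\alpha(D^-) = n$, and likewise $\alpha(D^+) = n$. By induction the subdigraphs induced on $D^-$ and on $D^+$ each have a path partition into $n$ paths; as each such path contains at most one vertex of $A$ while $|A| = n$, every part contains exactly one $a_i$. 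For each $i$, concatenating the $D^-$-path ending at $a_i$ with the $D^+$-path starting at $a_i$ produces a path of $D$ (the needed arcs are present, and the only common vertex of the two pieces is $a_i$ because $D^- \cap D^+ = A$ and each piece meets $A$ only in $a_i$). These $n$ paths are pairwise vertex-disjoint and cover $D^- \cup D^+ = V(D)$, which is the required partition, and the induction is complete.

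The main obstacle is exactly this last inductive step when $C$ does not already lower the stability number: proving that $D^-$ and $D^+$ cover $V(D)$, meet precisely in $A$, are proper subsets of $V(D)$, and retain stability number $n$ is where transitivity and acyclicity are genuinely used and where one must be careful. (A different route avoids the induction: form the bipartite ``split'' graph $B$ on $\{v^+ : v \in V(D)\} \cup \{v^- : v \in V(D)\}$ with an edge $u^+ v^-$ for each arc $uv$; the edges of a matching of $B$ splice into vertex-disjoint paths of $D$, with acyclicity ruling out cycles, so $\pi(D) = |V(D)| - \nu(B)$, and then K\"onig's theorem together with a routine dictionary between vertex covers of $B$ and antichains of $D$ finishes it. I would nonetheless keep the self-contained inductive argument as the main proof.)
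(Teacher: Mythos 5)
The paper does not prove Theorem~\ref{the:dilworth} at all; it is a classical result quoted from Dilworth's 1950 paper, so there is no internal proof to compare against. Your argument is a correct, self-contained proof: it is the standard inductive proof of Dilworth's theorem (usually attributed to Perles, and popularized by Galvin), translated into the digraph language of the paper, where transitivity makes every path a chain (hence the easy inequality $\pi(D)\ge\alpha(D)$) and acyclicity makes the comparability relation a strict partial order. The one step you pass over silently is why, in the partition of $D[D^-]$ into $n$ paths, the unique $A$-vertex of each path is its \emph{last} vertex (and dually the first vertex for $D^+$): if some $w$ followed $a_i$ on a $D^-$-path, then $a_i<w$ and, since $w\in D^-\setminus A$, also $w<a_j$ for some $j$, whence $a_i<a_j$ by transitivity, contradicting that $A$ is stable. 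That is a one-line verification in the same spirit as the facts you do check, so I would not call it a gap, but it is worth writing down since the concatenation step depends on it. The alternative route you mention via the bipartite split graph and K\H{o}nig's theorem is also standard and valid; either would serve as a proof of the cited theorem.
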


Note that this equality is not valid for digraphs in general; for example, if $D$ is a directed cycle with 5 vertices, then $\pi(D) = 1$ and $\alpha(D) = 2$. 
However, Gallai and Milgram~\cite{GallaiMilgram1960} proved that the following inequality holds for arbitrary digraphs.

\begin{theorem}[Gallai-Milgram~\cite{GallaiMilgram1960}]\label{the:gallai-milgram}
  For every digraph $D$, we have $\pi(D) \leq \alpha(D)$.
\end{theorem}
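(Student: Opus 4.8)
The plan is to derive the inequality from a slightly stronger statement, proved by induction: \emph{every digraph $D$ has a path partition $\sP$ whose set of terminal vertices $\{e(P) : P \in \sP\}$ is stable}. This suffices, since the paths of such a $\sP$ are vertex-disjoint, so $|\sP| = |\{e(P) : P \in \sP\}| \le \alpha(D)$, whence $\pi(D) \le |\sP| \le \alpha(D)$. I would argue by induction on $|V(D)|$, the cases $|V(D)| \le 1$ being immediate.

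For the inductive step, fix a vertex $v \in V(D)$ and apply the induction hypothesis to $D - v$ to obtain a path partition $\sQ = \{Q_1, \dots, Q_m\}$ of $D - v$ with stable terminal set $T = \{e(Q_1), \dots, e(Q_m)\}$; among all such partitions I would take one with $m$ minimum. Write $s_i$ and $t_i$ for the initial and terminal vertex of $Q_i$, so $T = \{t_1, \dots, t_m\}$. Minimality of $m$ forbids an arc $t_i s_j$ with $i \ne j$: otherwise $Q_i$ and $Q_j$ could be concatenated into a single path, and the resulting terminal set $T \setminus \{t_i\}$ would still be stable, contradicting the choice of $\sQ$.

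It then remains to reinsert $v$. If $v$ is nonadjacent to every vertex of $T$, then adding $v$ as a trivial path yields the stable terminal set $T \cup \{v\}$. If $v s_j \in A(D)$ for some $j$, then prepending $v$ to $Q_j$ yields a path partition of $D$ with the unchanged, hence stable, terminal set $T$. In the remaining case $v$ is adjacent to some vertex of $T$ but has no out-neighbour among $s_1, \dots, s_m$; here I would extend a suitably chosen $Q_i$ by $v$ so that $v$ becomes a terminal and, whenever this creates a new adjacency among terminals, repair it by a local re-routing of the paths (sliding $v$ forward along a path, or splitting a path to pull its terminal back by one vertex), with termination guaranteed by a monotone potential built from the number of paths together with $\sum_{P}|V(P)|^2$.

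The base case and the first two reinsertion cases are routine; the main obstacle is the last one. When $v$ is adjacent to several terminals with every incident arc oriented into $v$, one must show that a bounded sequence of local modifications reaches a path partition of $D$ with stable terminal set. The delicate point is that $D$ may contain directed $2$-cycles, so re-routing along a path can reintroduce an adjacency between the new terminal and its predecessor; the potential has to be designed so that every such reintroduction is offset by strict progress elsewhere. As a sanity check, for $\alpha(D) = 1$ the statement reduces to R\'edei's theorem that every tournament has a Hamiltonian path, and the induction above mirrors the usual proof of that fact. Should this reinsertion analysis prove too delicate, an alternative is to begin instead with a path partition of $D$ of minimum cardinality and argue directly, via exchanges controlled by the same potential, that its terminal set may be taken stable.
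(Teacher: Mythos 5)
Your proposed strengthening---that every digraph has a path partition whose set of terminal vertices is stable---is false, so the reduction at the very start of your argument already fails, and the ``delicate'' third reinsertion case cannot be rescued by any potential function. Take $V(D)=\{a,b,c\}$ and $A(D)=\{ab,cb\}$. The only paths in $D$ are the three trivial ones together with $ab$ and $cb$, so the only path partitions are $\{a,b,c\}$, $\{ab,c\}$ and $\{cb,a\}$, with terminal sets $\{a,b,c\}$, $\{b,c\}$ and $\{a,b\}$; each of these contains $b$ together with one of its neighbours, hence none is stable (while $\pi(D)=2=\alpha(D)$, so the theorem itself holds here). The classical strengthening that Gallai and Milgram actually prove is weaker in exactly the right way: there exist a path partition $\sP$ and a stable set containing one vertex \emph{from} each path of $\sP$, where the chosen vertices need not be the terminal ones. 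The standard induction works with a partition whose end-set $e(\sP)$ is minimal under inclusion and deletes a terminal vertex, which is genuinely different from insisting that $e(\sP)$ itself be stable.

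Note also that the paper does not prove Theorem~\ref{the:gallai-milgram}; it is quoted from the literature. The failure of your strengthening is precisely why the paper restricts attention to in-semicomplete digraphs: Lemma~\ref{lem:partition-stable} shows that in that class one \emph{can} always pass to a partition with stable end-set, because Theorem~\ref{the:path-mergeability} lets one merge two paths whose ends are adjacent without creating new path-ends. The counterexample above, in which $b$ has the two non-adjacent in-neighbours $a$ and $c$, is exactly a digraph that is not in-semicomplete. So your plan proves something true and useful in the paper's restricted setting, but it cannot establish Gallai--Milgram for arbitrary digraphs; you would need to switch to the one-representative-per-path formulation.
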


Let $k$ be a positive integer and let $D$ be a digraph.
The \textit{$k$-norm} of a path partition $\sP$ of $D$ is defined as $\sum_{P \in \sP} \min\{|P|,k\}$ and denoted by $|\sP|_k$.
A path partition $\sP$ of $D$ is \textit{$k$-optimal} if  $|\sP|_k$ is minimum among all possible path partitions of $D$.
The $k$-norm of a $k$-optimal path partition of $D$ is denoted by $\pi_k(D)$. 
A \textit{partial $k$-coloring} $\sC$ of $D$ is a collection of $k$ disjoint stable sets of $D$ called \textit{color classes} (empty color classes are allowed). 
The \textit{weight} of a partial $k$-coloring $\sC$, denoted by $||\sC||$, is defined as $\sum_{C \in \sC} |C|$. 
A partial $k$-coloring $\sC$ of $D$ is \emph{optimal} if $||\sC||$ is maximum among all possible partial $k$-colorings of $D$.
The weight of an optimal partial $k$-coloring of $D$ is denoted by $\alpha_k(D)$. 
In 1976, Greene and Kleitman~\cite{GreeneKleitman1976} proved the following result.

\begin{theorem}[Greene-Kleitman~\cite{GreeneKleitman1976}]\label{the:greene-kleitman}
  For every transitive acyclic digraph $D$ and every positive integer $k$, we have $\pi_k(D) = \alpha_k(D)$.
\end{theorem}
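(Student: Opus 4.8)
The plan is to prove the two inequalities $\alpha_k(D) \le \pi_k(D)$ and $\pi_k(D) \le \alpha_k(D)$ separately, using throughout that in a transitive acyclic digraph the vertex set of a path is exactly a chain (a set of pairwise adjacent vertices) and a stable set is exactly an antichain. The inequality $\alpha_k(D) \le \pi_k(D)$ is a weak-duality count: if $\sC = \{C_1, \dots, C_k\}$ is an optimal partial $k$-coloring and $\sP$ a $k$-optimal path partition, then for each $P \in \sP$ the sets $C_i \cap V(P)$ are pairwise disjoint subsets of $V(P)$, each of size at most one (a chain meets an antichain at most once), so $\sum_i |C_i \cap V(P)| \le \min\{|P|, k\}$; summing over $\sP$ and swapping the order of summation gives $\alpha_k(D) = \|\sC\| \le \sum_{P} \min\{|P|, k\} = \pi_k(D)$.

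For the reverse inequality I would set up a linear program. First note that a set $X \subseteq V(D)$ is a union of $k$ antichains precisely when every chain meets $X$ in at most $k$ vertices (if every chain of $D[X]$ has order at most $k$, group the vertices of $D[X]$ by height to split it into $k$ antichains); hence $\alpha_k(D) = \max\{\mathbf{1}^{\top}x : x \in \{0,1\}^{V(D)},\ x(V(C)) \le k\text{ for every chain }C\}$. Consider the linear relaxation obtained by replacing $x \in \{0,1\}^{V(D)}$ with $0 \le x \le \mathbf{1}$; its dual is
\begin{equation*}
  \min\Bigl\{\, k\sum_{C} y_C + \sum_{v \in V(D)} z_v \;:\; y \ge 0,\ z \ge 0,\ \sum_{C \ni v} y_C + z_v \ge 1 \ \text{ for all } v \in V(D) \,\Bigr\},
\end{equation*}
the first sum running over all chains $C$ of $D$. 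Any integral feasible solution $(y,z)$ of this dual yields a path partition of $D$ of $k$-norm at most its cost: letting $\mathcal C'$ be the set of chains with $y_C \ge 1$ and $W = \bigcup_{C \in \mathcal C'} V(C)$, each vertex outside $W$ has $z_v \ge 1$, while $D[W]$ has no antichain larger than $|\mathcal C'|$ and so, by Dilworth's theorem (Theorem~\ref{the:dilworth}), partitions into at most $|\mathcal C'|$ chains; these chains together with a trivial path at each vertex of $V(D)\setminus W$ form a path partition of $k$-norm at most $k\,|\mathcal C'| + |V(D)\setminus W| \le k\sum_C y_C + \sum_v z_v$. Conversely, a $k$-optimal path partition gives an integral dual solution of cost exactly $\pi_k(D)$ (set $y_P = 1$ for the paths $P$ with $|P| \ge k$ and $z_v = 1$ for the vertices on the shorter paths), so the integer optimum of the dual equals $\pi_k(D)$.

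It remains to bridge the integer and the fractional optima, and this is the crux. I claim the system $\{\,0 \le x \le \mathbf{1},\ x(V(C)) \le k\ \forall\, C\,\}$ is totally dual integral, so that the maximization has equal linear and integer optima, the dual attains its optimum at an integral point, and linear-programming duality then yields $\alpha_k(D) = \text{(LP optimum)} = \pi_k(D)$. This is the only step that uses transitivity: up to the box constraints, the constraint matrix is the clique-vertex incidence matrix of the comparability graph of $D$, which is a perfect graph, and the integrality needed is the $k$-bounded analogue of the classical fact that clique-constrained stable-set systems of perfect graphs are (box-)TDI — provable either by transferring that fact through the standard reduction of blowing up each vertex into $k$ copies, or directly by an uncrossing argument that repeatedly replaces two dual-positive chains sharing a vertex by the two chains obtained from interchanging their parts above that vertex, driving the support to a laminar family whose incidence matrix is totally unimodular and hence can be rounded. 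The weak-duality half, the extraction of the path partition via Dilworth's theorem, and the linear-programming formalities are all routine; the total dual integrality — equivalently, rounding an optimal fractional chain cover to an integral one of the same cost — is the real obstacle and the point where the structure of transitive acyclic digraphs is indispensable.
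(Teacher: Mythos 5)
This statement is a classical result of Greene and Kleitman that the paper only cites as background; it contains no proof of it, so there is nothing internal to compare your argument against. Your weak-duality half ($\alpha_k(D) \le \pi_k(D)$) is correct, as is the LP setup: the Mirsky-type characterization of unions of $k$ antichains, the extraction of a path partition from an integral dual solution via Dilworth's theorem, and the construction of an integral dual solution of cost $\pi_k(D)$ from a $k$-optimal partition are all sound.

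The genuine gap is exactly where you say it is: the claim that the system $\{0 \le x \le \mathbf{1},\ x(V(C)) \le k \text{ for every chain } C\}$ is totally dual integral \emph{is} the Greene--Kleitman theorem, and neither of the two routes you sketch is carried out. The uncrossing suggestion is not a proof as stated: swapping the portions of two dual-positive chains above a shared vertex is a legitimate move, but you give no termination argument, no reason the support becomes laminar, and no justification that the vertex--chain incidence matrix of a laminar family of chains is totally unimodular (laminarity of vertex sets does not by itself yield total unimodularity once the box constraints are in play). The blow-up reduction is likewise only named, not performed: making it work requires the replication lemma for perfect graphs together with an integer decomposition or box-TDI property of stable-set polytopes of perfect graphs, facts whose proofs are at least as deep as the theorem being established and which you would need to state and verify for comparability graphs specifically. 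So you have correctly reduced the theorem to a min-max integrality statement, but the combinatorial core --- what the original saturation argument of Greene and Kleitman, or Frank's min-cost-flow proof, actually supplies --- is deferred rather than proved.
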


Since $\pi(D) = \pi_1(D)$ and $\alpha(D) = \alpha_1(D)$, Theorem~\ref{the:dilworth} is the particular case of Theorem~\ref{the:greene-kleitman} in which $k = 1$.
In 1981, Linial~\cite{Linial1981} conjectured that Theorem~\ref{the:greene-kleitman} can be extended to arbitrary digraphs in the same way that Theorem~\ref{the:gallai-milgram} extends Theorem~\ref{the:dilworth}.

\begin{conjecture}[Linial~\cite{Linial1981}]\label{co:linial}
  For every digraph $D$ and every positive integer $k$, we have $\pi_k(D) \leq \alpha_k(D)$.
\end{conjecture}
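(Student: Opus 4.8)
The plan is to reduce Linial's inequality to the construction of a single partial $k$-coloring and then to build that coloring by induction on $k$. Observe first that it suffices to produce, for some $k$-optimal path partition $\sP$, a partial $k$-coloring $\sC$ that is orthogonal to it (this is exactly what Berge's Conjecture asserts): if each path $P \in \sP$ meets $\Min{|P|,k}$ distinct color classes, then $P$ contains at least $\Min{|P|,k}$ colored vertices, and since the paths partition $V(D)$ while the classes are pairwise disjoint subsets of $V(D)$, summing over $\sP$ gives $||\sC|| = \sum_{P \in \sP} |V(P) \cap (\bigcup_{C \in \sC} C)| \ge \sum_{P \in \sP} \Min{|P|,k} = \pi_k(D)$. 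Hence $\alpha_k(D) \ge ||\sC|| \ge \pi_k(D)$, which is the desired bound, so I would aim for the stronger, orthogonal statement.

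The induction runs on $k$. For $k = 1$ the claim is precisely the Gallai--Milgram Theorem (Theorem~\ref{the:gallai-milgram}) in its refined form: a path partition $\sP$ with $|\sP| = \pi(D)$ admits a stable set meeting every path, namely a suitable choice of one endpoint per path. This supplies the base case. For the inductive step I would start from a $k$-optimal path partition $\sP$, extract one stable set $S$ to serve as the last color class, apply the induction hypothesis to $D - S$ with $k - 1$ colors, and finally adjoin $S$ as the $k$-th class. The coloring produced on $D - S$ must be orthogonal to a $(k-1)$-optimal path partition of $D - S$, so the whole argument hinges on a \emph{peeling lemma}: there should exist a $k$-optimal $\sP$ and a stable set $S$, meeting each path that contributes to the top ``layer'' of the norm, such that $\pi_{k-1}(D - S) \le \pi_k(D) - |S|$ and $S$ extends the inherited coloring to an orthogonal partial $k$-coloring.

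To locate such an $S$ I would set up an auxiliary exchange/augmentation framework in the spirit of the flow-theoretic proofs of the Greene--Kleitman Theorem (Theorem~\ref{the:greene-kleitman}): select $S$ as a system of distinct representatives among the ``active'' vertices of $\sP$ (for short paths, the unique vertex not yet colored; for paths of order exceeding $k$, an endpoint that can be shed by rerouting), and reroute the paths so that precisely the intended paths drop their capped contribution $\Min{|P|,k}$ by one. Carrying enough structure through the induction — a $k$-optimal partition together with a canonical orthogonal coloring that admits such exchanges — is what would let the representatives be chosen stably and the norm accounting close.

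The hard part, and the reason the conjecture remains open, is controlling the $k$-norm under deletion of a stable set in the presence of directed cycles. For a path of order greater than $k$, removing a single vertex does not lower its capped contribution, so the naive ``one vertex per path'' peeling fails, and the rerouting meant to fix this can create or destroy arcs uncontrollably. In the transitive acyclic case these difficulties vanish because $\pi_k(D)$ is concave in $k$ and chains and antichains obey a clean min--max duality; cycles break both the concavity and the underlying Menger-type duality, so the exchange arguments lose their integrality guarantee. I expect the decisive step to be proving that the auxiliary augmentation terminates with an integral stable set even when $D$ contains cycles — equivalently, establishing a cycle-robust analogue of antichain peeling — and I would attack it through a potential-function or discharging argument tailored to the terminal vertices of the current partition.
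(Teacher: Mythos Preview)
The paper does not prove this statement: it is recorded as Conjecture~\ref{co:linial} (Linial's Conjecture) and remains open in general. There is therefore no ``paper's own proof'' to compare against. What the paper \emph{does} prove is Berge's Conjecture --- and hence Linial's inequality --- for the restricted class of locally in/out-semicomplete digraphs (Corollaries~\ref{co:berge-in-semicomplete} and~\ref{co:berge-out-semicomplete}).

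Your proposal is not a proof either, and you say so yourself: the ``peeling lemma'' you need is precisely the open problem, and the final paragraph is a description of where the argument breaks down rather than a resolution. So as a proof attempt this has a genuine gap, namely the entire inductive step.

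That said, your outline is exactly the skeleton the paper uses in the in-semicomplete case (Theorem~\ref{th:bondy-in-semicomplete}): induct on $k$, strip off one stable ``layer'' $S$ consisting of the terminal vertices $e(\sQ)$ of the current partition, apply the inductive hypothesis with $k-1$ on $D-S$, and adjoin $S$ as the $k$-th color. What makes the argument go through there --- and what your general proposal is missing --- is not an augmentation or discharging trick but the structural property of Theorem~\ref{the:path-mergeability}: in an in-semicomplete digraph any two paths with adjacent endpoints can be merged into a single path with the same vertex set. Lemma~\ref{lem:partition-stable} uses this repeatedly to force $e(\sQ)$ to be stable without increasing the $k$-norm, which is exactly the ``choose the representatives stably while controlling the norm'' step you flag as hard. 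For arbitrary digraphs no such merging is available, and that is why your exchange framework cannot be completed as stated.
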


In an attempt to unify Theorem~\ref{the:gallai-milgram} and a result proved independently by Gallai~\cite{Gallai1968} and Roy~\cite{Roy1967} (Theorem~\ref{the:gallai-roy}), Berge proposed the following conjecture, which is a strengthening of Conjecture~\ref{co:linial}.
A path partition $\sP$ and a partial $k$-coloring $\sC$ are \emph{orthogonal} if each path $P \in \sP$ meets $\min\{|P|,k\}$ distinct color classes of $\sC$ (we also say that $\sP$ is orthogonal to $\sC$ and vice-versa).

\begin{conjecture-berge}
  Let $D$ be a digraph and let $k$ be a positive integer.
  If $\sP$ is a $k$-optimal path partition of $D$, then there exists a partial $k$-coloring of $D$ orthogonal to $\sP$.
\end{conjecture-berge}

Berge's Conjecture remains open,
but we know it holds for $k = 1$~\cite{Linial1978},
$k = 2$~\cite{BergerHartman2008}, when $\lambda(D) = 3$~\cite{Berge1982},
when the $k$-optimal path partition has only paths of order at most $k$~\cite{Berge1982} or if it has only paths of order at least $k$~\cite{AharoniHartman1993},
acyclic digraphs~\cite{Cameron1986,AharoniEtAl1985}, digraphs where all directed cycles are pairwise vertex-disjoint~\cite{Sridharan1993},
bipartite digraphs~\cite{Berge1982}, digraphs containing a Hamiltonian path~\cite{Berge1982},
and $k \geq \lambda(D) - 3$~\cite{Herskovics2016}.

Now we exchange the roles of paths and stable sets in the concepts discussed so far and present some similar results.
Let $D$ be a digraph.
A \emph{coloring} of $D$ is a partition of $V(D)$ into stable sets called \textit{color classes}.
A coloring $\sC$ of $D$ is \emph{optimal} if $|\sC|$ is minimum among all possible colorings of $D$.
The \textit{chromatic number} of $D$, denoted by $\chi(D)$, is the cardinality of an optimal coloring of $D$.
Mirsky~\cite{Mirsky1971} proved the following dual of Theorem~\ref{the:dilworth}.

\begin{theorem}[Mirsky~\cite{Mirsky1971}]\label{the:mirsky}
 For every transitive acyclic digraph $D$, we have $\chi(D)=\lambda(D)$.
\end{theorem}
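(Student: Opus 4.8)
The plan is to establish the two inequalities $\chi(D) \ge \lambda(D)$ and $\chi(D) \le \lambda(D)$ separately. The key observation used throughout is that, since $D$ is transitive, the vertices of any path of $D$ are pairwise adjacent: if $v_1 v_2 \cdots v_\ell$ is a path, a straightforward induction on $j-i$ shows that $v_i v_j \in A(D)$ whenever $i < j$.

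For the lower bound, I would fix a path $P$ of $D$ with $|P| = \lambda(D)$. By the observation above, every two vertices of $P$ are adjacent, so no stable set of $D$ contains two vertices of $V(P)$. Hence every coloring of $D$ assigns $\lambda(D)$ distinct colors to the vertices of $P$, and therefore $\chi(D) \ge \lambda(D)$.

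For the upper bound, I would define, for each vertex $v \in V(D)$, the value $h(v)$ to be the order of a longest path $Q$ with $e(Q) = v$; note that $1 \le h(v) \le \lambda(D)$. Set $L_i = \{v \in V(D) \colon h(v) = i\}$ for each $i$ with $1 \le i \le \lambda(D)$. I claim each $L_i$ is stable. Suppose not, and let $u, v \in L_i$ with, say, $uv \in A(D)$; let $Q$ be a longest path ending at $u$, so that $|Q| = i$. Since $D$ is acyclic, $v \notin V(Q)$, for otherwise the segment of $Q$ from $v$ to $u$ followed by the arc $uv$ would form a directed cycle. Consequently $Qv$ is a path of order $i+1$ ending at $v$, contradicting $h(v) = i$. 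Thus $\{L_1, \ldots, L_{\lambda(D)}\}$ (after discarding empty sets) is a coloring of $D$ with at most $\lambda(D)$ color classes, whence $\chi(D) \le \lambda(D)$.

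Combining the two inequalities yields $\chi(D) = \lambda(D)$. The only step that requires care is the verification that the level sets $L_i$ are stable; this is precisely where acyclicity of $D$ enters, guaranteeing that the vertex witnessing an adjacency inside $L_i$ does not already lie on the longest path being extended. I would not expect any further obstacle, and in particular transitivity is needed only for the (easy) lower bound.
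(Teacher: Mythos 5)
Your proof is correct and is the standard argument for Mirsky's theorem: the paper itself states this result only as a cited known theorem and gives no proof, so there is nothing to diverge from. Both halves check out — transitivity makes the vertices of a longest path pairwise adjacent, forcing $\chi(D)\ge\lambda(D)$, and the level sets $L_i$ of the longest-path-ending-at-$v$ function are stable by exactly the acyclicity argument you give, yielding $\chi(D)\le\lambda(D)$.
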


Similarly to Theorem~\ref{the:gallai-milgram},  Gallai~\cite{Gallai1968} and Roy~\cite{Roy1967}, independently, proved the following result.
\begin{theorem}[Gallai-Roy~\cite{Gallai1968,Roy1967}]\label{the:gallai-roy}
  For every digraph $D$, we have $\chi(D) \leq \lambda(D)$.
\end{theorem}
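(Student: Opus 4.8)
The plan is to use the classical spanning-subdigraph argument. First I would choose a spanning subdigraph $D' = (V(D), A')$ of $D$ with $A' \subseteq A(D)$ maximal (under inclusion) subject to $D'$ being acyclic. For each $v \in V(D)$ let $\ell(v)$ be the order of a longest directed path of $D'$ ending at $v$; this is well defined since $D'$ is acyclic, and $1 \le \ell(v) \le \lambda(D)$, the upper bound because such a path is in particular a path of $D$. The whole statement then reduces to the claim that $\ell$ is a proper coloring of $D$: if so, it partitions $V(D)$ into at most $\lambda(D)$ stable sets, and hence $\chi(D) \le \lambda(D)$.

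To prove that claim I would first record that $\ell$ is strictly increasing along every arc of $D'$: if $xy \in A(D')$ and $P$ is a longest path of $D'$ ending at $x$, then $y \notin V(P)$ --- otherwise $D'$ would contain the directed cycle obtained by closing up $P$ through the arc $xy$ --- so appending $y$ to $P$ yields a path of $D'$ ending at $y$, giving $\ell(y) \ge \ell(x) + 1$. Now let $u$ and $v$ be adjacent in $D$, say $uv \in A(D)$; I must show $\ell(u) \ne \ell(v)$. If $uv \in A(D')$, the observation above gives $\ell(v) > \ell(u)$. If $uv \in A(D) \setminus A(D')$, then by maximality of $A'$ the digraph $(V(D), A' \cup \{uv\})$ contains a directed cycle, which (since $D'$ is acyclic) must traverse $uv$; removing $uv$ from that cycle leaves a directed path of $D'$ from $v$ to $u$, and since $\ell$ strictly increases along each of its arcs we get $\ell(u) > \ell(v)$. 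Either way $\ell(u) \ne \ell(v)$, so $\ell$ is a proper coloring of $D$ and we are done.

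Every step here is elementary, so I do not expect a real obstacle; the only points that require a little care are the justification that $\ell$ is strictly increasing along the arcs of $D'$ (which relies on acyclicity rather than on anything being assumed) and the correct use of the maximality of $A'$ to extract a directed path from $v$ to $u$ in $D'$ out of an arc $uv \notin A(D')$.
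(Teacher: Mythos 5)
Your argument is correct and complete: the maximal acyclic spanning subdigraph, the strict increase of $\ell$ along arcs of $D'$, and the use of maximality to extract a $v$--$u$ path when $uv \notin A(D')$ are exactly the ingredients of the classical Gallai--Roy proof, and the argument also handles the digons that this paper permits, since for adjacent $u,v$ it suffices to run the case analysis on any one arc between them. The paper itself offers no proof of this statement --- it is quoted as a known theorem with citations to Gallai and Roy --- so there is nothing to compare against beyond noting that yours is the standard proof and it is sound.
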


Let $k$ be a positive integer.
The \textit{$k$-norm} of a coloring $\sC$, denoted by $|\sC|_k$, is $\sum_{C \in \sC} \Min{|C|, k}$.
A coloring $\sC$ of a digraph $D$ is \textit{$k$-optimal} if $|\sC|_k$ is minimum among all possible path partitions of $D$.
The $k$-norm of a $k$-optimal coloring of a digraph $D$ is denoted by $\chi_k(D)$. 
A \emph{(path) $k$-pack} $\sP$ is a set of at most $k$ vertex-disjoint paths of a digraph $D$.
The \emph{weight} of a $k$-pack $\sP$, denoted by $||\sP||$, is defined as $|V(\sP)|$ (i.e., the number of vertices $\sP$ covers).
A $k$-pack $\sP$ of a digraph $D$ is \textit{optimal} if $||\sP||$ is maximum among all possible $k$-packs of $D$.
The weight of an optimal $k$-pack of a digraph $D$ is denoted by $\lambda_k(D)$.
Note that $\chi(D) = \chi_1(D)$ and $\lambda(D) = \lambda_1(D)$.
Greene~\cite{Greene1976} proved the following theorem for transitive acyclic digraphs.

\begin{theorem}[Greene~\cite{Greene1976}]\label{the:greene}
  For every transitive acyclic digraph $D$ and every positive integer $k$, we have $\chi_k(D) = \lambda_k(D)$.
\end{theorem}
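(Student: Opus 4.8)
The plan is to prove the two inequalities $\lambda_k(D)\le\chi_k(D)$ and $\chi_k(D)\le\lambda_k(D)$ separately, regarding $D$ throughout as a finite poset, so that its paths are chains and its stable sets are antichains.

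For $\lambda_k(D)\le\chi_k(D)$ I would use double counting. Fix an optimal $k$-pack $\sP$ and a $k$-optimal coloring $\sC$. The vertices of a path being pairwise comparable, each color class meets each path in at most one vertex; hence every path $P$ meets exactly $|V(P)|$ classes, and
\[
  ||\sP|| = \sum_{P\in\sP}|V(P)| = \sum_{P\in\sP}\bigl|\{\,C\in\sC : C\cap V(P)\neq\emptyset\,\}\bigr| = \sum_{C\in\sC}\bigl|\{\,P\in\sP : C\cap V(P)\neq\emptyset\,\}\bigr|.
\]
A class $C$ meets at most $|\sP|\le k$ paths and, the paths being vertex-disjoint, at most $|C|$ of them, so at most $\min\{|C|,k\}$ of them; summing gives $\lambda_k(D)=||\sP||\le\sum_{C\in\sC}\min\{|C|,k\}=\chi_k(D)$. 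Equality here would force $\sP$ and $\sC$ to be orthogonal, which is precisely why the substance of the theorem is that such an orthogonal pair \emph{exists}.

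For $\chi_k(D)\le\lambda_k(D)$ I would invoke the Greene--Kleitman structure theory refining Theorem~\ref{the:greene-kleitman}. Set $n:=|V(D)|$ and, for $j\ge 0$, let $d_j$ be the maximum weight of a $j$-pack (so $d_k=\lambda_k(D)$) and $a_j$ the maximum weight of a union of $j$ stable sets. The structure theorem says that the differences $d_{j+1}-d_j$ are non-increasing, so $\mu:=(d_1,\,d_2-d_1,\,d_3-d_2,\dots)$ is a partition of $n$, and that $(a_1,\,a_2-a_1,\dots)$ is its conjugate $\mu^{*}$. Writing $x^{+}$ for $\max\{x,0\}$: for any coloring $\sC$, the union of its classes of size exceeding $k$ (a union of, say, $t$ stable sets) has size $\sum_{C\in\sC}(|C|-k)^{+}+tk\le a_t$, so that $\sum_{C\in\sC}\min\{|C|,k\}=n-\sum_{C\in\sC}(|C|-k)^{+}\ge n-\max_{j\ge 0}(a_j-jk)=\sum_i\min\{\mu^{*}_i,k\}=\mu_1+\cdots+\mu_k=d_k$, where the identity $\sum_i\min\{\mu^{*}_i,k\}=\mu_1+\cdots+\mu_k$ is the Young-diagram statement that capping column heights at $k$ counts the cells in the first $k$ rows. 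This re-proves $\chi_k(D)\ge\lambda_k(D)$; the real work is to attain equality. For that, from a $k$-saturated antichain partition of $D$ — equivalently, from the Robinson--Schensted--Knuth tableau of a linear extension of $D$, whose first $k$ rows have total length $d_k$ — one reads off a maximum $k$-pack $\sP^{*}$ together with a coloring $\sC^{*}$ orthogonal to it; then all the inequalities above are tight and $\chi_k(D)\le|\sC^{*}|_k=||\sP^{*}||=\lambda_k(D)$.

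The main obstacle is exactly the Greene structure theorem — the concavity of $(d_j)_j$ and the conjugacy of $(a_j)_j$ with it — together with the extraction of the orthogonal witness $(\sP^{*},\sC^{*})$; everything else is bookkeeping. I would obtain both at once via the RSK correspondence on a linear extension of $D$ (Greene's original route). Alternatively one can argue polyhedrally: the linear programs for covering $D$ by chains and for packing $k$ vertex-disjoint chains have network (totally unimodular) constraint matrices, hence integral optima, and LP duality closes the gap; or one can attempt a direct induction on $|V(D)|$ built on Dilworth's Theorem~\ref{the:dilworth} and Mirsky's Theorem~\ref{the:mirsky}. The fiddly points I anticipate are the passage between $D$ and its associated poset and making the tableau rows line up with the $k$-norm term by term; the genuine difficulty is entirely contained in the structure theorem.
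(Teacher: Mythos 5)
This statement is quoted in the paper as a known background result of Greene (1976); the paper supplies no proof of it, so there is nothing internal to compare your argument against line by line. Judged on its own terms, your first half is complete and correct: in a transitive acyclic digraph every path is a clique and every color class is stable, so each class meets each path at most once, and the double count $\sum_{P}|V(P)|=\sum_{C}|\{P: C\cap V(P)\neq\emptyset\}|\le\sum_{C}\min\{|C|,k\}$ cleanly gives $\lambda_k(D)\le\chi_k(D)$. Your observation that equality forces orthogonality is also the right way to see where the difficulty lives.

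The second half, however, is not a proof but a reduction to a result at least as strong as the one being proved. The concavity of $(d_j)$ together with the conjugacy of $(a_j)$ \emph{is} Greene's duality theorem; deducing $\chi_k(D)\le\lambda_k(D)$ from it (your Young-diagram computation, which is arithmetically correct) is exactly how the corollary is extracted in the literature, but you have not proved the structure theorem, nor the existence of the $k$-saturated antichain partition or of the orthogonal pair $(\sP^{*},\sC^{*})$ that you need to make the chain of inequalities tight. You are candid that this is ``the main obstacle,'' and you name three standard routes (RSK on a linear extension, total unimodularity plus LP duality, induction via Dilworth and Mirsky), but none is carried out, and the RSK route in particular hides a nontrivial fact (that the shape of the tableau of a linear extension records the $d_j$ and $a_j$ of the poset, not of the chosen linear order). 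So as a self-contained argument there is a genuine gap precisely at the point you identify; as a derivation from the cited theorem of Greene, it is sound and matches how the paper treats the statement, namely as an external input.
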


Linial~\cite{Linial1981} proposed Conjecture~\ref{co:dual-linial} for arbitrary digraphs.

\begin{conjecture}[Linial~\cite{Linial1981}]\label{co:dual-linial}
  For every digraph $D$ and every positive integer $k$, we have $\chi_k(D) \leq \lambda_k(D)$.
\end{conjecture}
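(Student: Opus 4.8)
The plan is to mimic the proof of the Gallai--Roy theorem (Theorem~\ref{the:gallai-roy}), which is exactly the case $k=1$, and to reduce the general inequality to Greene's theorem (Theorem~\ref{the:greene}) for transitive acyclic digraphs. First I would fix a spanning acyclic subdigraph $D'$ of $D$ with $A(D')$ \emph{maximal}, and let $\widehat{D}$ denote its transitive closure, a transitive acyclic digraph on $V(D)$. The key elementary observation is that, by maximality, every arc of $D$ becomes a comparability of $\widehat{D}$: if $uv \in A(D) \setminus A(D')$, then $D' + uv$ contains a directed cycle, so $D'$ has a directed path from $v$ to $u$, whence $u$ and $v$ are comparable in $\widehat{D}$. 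Consequently every antichain of $\widehat{D}$ is a stable set of $D$, so every coloring of $\widehat{D}$ is a coloring of $D$, and therefore $\chi_k(D) \le \chi_k(\widehat{D})$. Applying Theorem~\ref{the:greene} to $\widehat{D}$ gives $\chi_k(\widehat{D}) = \lambda_k(\widehat{D})$, so that
\[
  \chi_k(D) \ \le\ \chi_k(\widehat{D}) \ =\ \lambda_k(\widehat{D}).
\]

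It then remains to bound $\lambda_k(\widehat{D})$ by $\lambda_k(D)$. For $k=1$ this is immediate: a longest chain of $\widehat{D}$ is realized by a directed path of $D' \subseteq D$ of the same order, so $\lambda_1(\widehat{D}) = \lambda(D') \le \lambda(D)$, and we recover Theorem~\ref{the:gallai-roy}. For general $k$, the natural attempt is to take an optimal $k$-pack of $\widehat{D}$, that is, $k$ vertex-disjoint chains $Q_1, \ldots, Q_k$ covering $\lambda_k(\widehat{D})$ vertices, and to realize them as $k$ vertex-disjoint paths of $D$ covering at least as many vertices. A single chain $Q_i = u_1 < u_2 < \cdots < u_s$ is always realizable, since each comparability $u_j < u_{j+1}$ corresponds to a directed $D'$-path from $u_j$ to $u_{j+1}$, and concatenation yields a $D'$-path through $u_1, \ldots, u_s$ (acyclicity forbids repetitions). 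I would set up the simultaneous realization of all $k$ chains as an integral disjoint-paths (vertex-splitting flow) problem in $D'$ and try to route them without collisions.

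The main obstacle is precisely this simultaneous realization. For $k \ge 2$, the $D'$-paths realizing two different chains may be forced to share an intermediate vertex, so $k$ disjoint chains of $\widehat{D}$ need not be realizable by $k$ disjoint paths of $D$; indeed $\lambda_k(\widehat{D})$ can strictly exceed $\lambda_k(D)$ (two chains both requiring a common ``hub'' vertex as an intermediate already break it), so the reduction above overshoots the target. The extra strength we need is exactly the freedom to use stable sets of $D$ that \emph{cross} the chain-decomposition of $\widehat{D}$, and such stable sets are invisible to Greene's theorem applied to $\widehat{D}$. I therefore expect the crux to be one of the following refinements, in increasing order of ambition: (i) choose the maximal acyclic subdigraph $D'$ adaptively, compatibly with a fixed optimal $k$-pack of $D$, so that the optimal chain family of $\widehat{D}$ becomes collision-free; (ii) induct on $k$, peeling off a longest path $P$ and pairing the lower bound $\lambda_k(D) \ge \lambda(D) + \lambda_{k-1}(D - V(P))$ with a matching coloring bound $\chi_k(D) \le \lambda(D) + \chi_{k-1}(D - V(P))$; or (iii) prove an uncrossing lemma that rounds an optimal chain-packing of $\widehat{D}$ to a path-packing of $D$ of equal weight. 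Each of these effectively reintroduces the orthogonality content of the Aharoni--Hartman--Hoffman conjecture, which is why controlling this gap is the genuinely hard part and the reason Conjecture~\ref{co:dual-linial} remains difficult in full generality.
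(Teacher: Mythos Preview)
The statement you are trying to prove is listed in the paper as an \emph{open conjecture} (Conjecture~\ref{co:dual-linial}); the paper does not claim a proof of it for general digraphs, and indeed it is still open. So there is no ``paper's own proof'' to compare against.

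That said, your write-up is honest about where the argument breaks, and the break is real. The chain of inequalities
\[
  \chi_k(D)\ \le\ \chi_k(\widehat{D})\ =\ \lambda_k(\widehat{D})
\]
is correct, but the final step $\lambda_k(\widehat{D}) \le \lambda_k(D)$ is false for $k\ge 2$, exactly for the reason you give: realizing a single chain of $\widehat{D}$ by a $D'$-path is fine, but the $D'$-paths realizing two disjoint chains may be forced through a common intermediate vertex. A concrete instance is $V=\{a,b,c,d,e\}$ with arcs $a\to c$, $c\to b$, $d\to c$, $c\to e$; here $\lambda_2(\widehat{D})=5$ while $\lambda_2(D)=4$, so your upper bound overshoots. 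Note that in this example $\chi_2(D)=3$ (take the stable set $\{a,b,d,e\}$ together with $\{c\}$), and this stable set is \emph{not} an antichain of $\widehat{D}$; that is precisely the ``extra freedom'' you point to, and it is invisible to Greene's theorem applied to $\widehat{D}$.

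Your proposed fixes (i)--(iii) are reasonable directions but none is carried out, and each is essentially as hard as the conjecture itself. In particular, (ii) fails as stated: one does \emph{not} in general have $\chi_k(D)\le \lambda(D)+\chi_{k-1}(D-V(P))$ for a longest path $P$, because an optimal $k$-coloring need not interact with $P$ in a way that lets you peel off $\lambda(D)$ from its $k$-norm. And (iii), an uncrossing lemma turning a chain $k$-pack of $\widehat{D}$ into a path $k$-pack of $D$ of the same weight, is simply false by the example above. So what you have is a correct proof of the $k=1$ case (Gallai--Roy) together with a clear diagnosis of why the natural extension stalls; it is not a proof of Conjecture~\ref{co:dual-linial}.
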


Note that Theorems~\ref{the:mirsky}, \ref{the:gallai-roy}, \ref{the:greene}, and Conjecture~\ref{co:dual-linial} can be seen as dual versions of Theorems~\ref{the:dilworth}, \ref{the:gallai-milgram}, \ref{the:greene-kleitman}, and Conjecture~\ref{co:linial}, respectively, where the roles of paths and stable sets are exchanged.
Therefore, it is natural to ask if there exists a dual version of Berge's Conjecture.
By exchanging the roles of paths and stable sets, we end up with the following definition of orthogonality. 
A coloring $\sC$ and a $k$-pack $\sP$ are \textit{orthogonal} if each color class $C \in \sC$ meets $\min\{|C|,k\}$ distinct paths of $\sP$ (we also say that $\sC$ is orthogonal to $\sP$ and vice-versa).
The natural dual version of Berge's Conjecture states that a $k$-optimal coloring of a digraph $D$, for some positive integer $k$, has a $k$-pack of $D$ orthogonal to it.
This statement is a strengthening of Conjecture~\ref{co:dual-linial}, however it is false.
For example, take $D$ defined as $V(D)$ $=$ $\{v_1, v_2, v_3, v_4, v_5\}$ and $A(D) = \{v_1v_2, v_1v_5, v_3v_2, v_3v_4, v_5v_4\}$, and take $\sC = \{\{v_1, v_4\}, \{v_2, v_5\}, \{v_3\}\}$.
As an alternative strengthening of Conjecture~\ref{co:dual-linial}, Aharoni, Hartman, and Hoffman~\cite{AharoniEtAl1985} gave the following conjecture.

\begin{conjecture-ahh}
  Let $D$ be a digraph and let $k$ be a positive integer.
  If $\sP$ is an optimal $k$-pack of $D$, then
  there exists a coloring of $D$ orthogonal to $\sP$.
\end{conjecture-ahh}

This conjecture remains open, but we know it holds for $k = 1$~\cite{Gallai1968,Roy1967}, $k \geq \pi(D)$~\cite{HartmanEtAl1994}, when the optimal $k$-pack has at least one trivial path~\cite{HartmanEtAl1994}, bipartite digraphs~\cite{HartmanEtAl1994}, and acyclic digraphs~\cite{AharoniEtAl1985}.

A digraph is \emph{semicomplete} if all its vertices are pairwise adjacent.
A digraph $D$ is \emph{(locally) in-semicomplete} (respectively, \emph{out-semicomplete}) if, for every vertex $v \in V(D)$, the in-neighborhood (respectively, out-neighborhood) of $v$ induces a semicomplete digraph. 
One important characterization of in-semicomplete digraphs which we use throughout the text is the following.

\begin{theorem}[\cite{Bang-JensenGutin2008}]\label{the:path-mergeability}
  A digraph $D$ is in-semicomplete if, and only if, for every vertex $v$ and every pair of internally vertex-disjoint paths $P$ and $Q$ such that $v = e(P) = e(Q)$, there exists a path $R$ such that $V(R) = V(P) \cup V(Q)$ and $e(R) = v$.
\end{theorem}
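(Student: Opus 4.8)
The plan is to establish the two directions of the equivalence separately. The direction ``the mergeability property $\Rightarrow$ $D$ in-semicomplete'' is immediate: given a vertex $v$ and two distinct in-neighbors $u,w$ of $v$, I would apply the hypothesis to the paths $P = uv$ and $Q = wv$, which have no internal vertices and satisfy $e(P)=e(Q)=v$. The resulting path $R$ satisfies $V(R) = \{u,v,w\}$ and $e(R) = v$, so $R$ is either $uwv$ or $wuv$; in both cases $u$ and $w$ are adjacent. Hence the in-neighborhood of every vertex of $D$ induces a semicomplete digraph, i.e.\ $D$ is in-semicomplete.

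For the converse, I would assume $D$ is in-semicomplete and induct on $|V(P)|+|V(Q)|$. If one of the two paths is trivial, say $P = v$, then $V(P)\cup V(Q) = V(Q)$ and $R := Q$ works. Otherwise write $P = p_1\cdots p_{s-1}v$ and $Q = q_1\cdots q_{t-1}v$ with $s,t\ge 2$. Since $D$ is in-semicomplete and both $p_{s-1}$ and $q_{t-1}$ lie in the in-neighborhood of $v$, these two vertices are equal or adjacent, and I would split into the two corresponding cases. If $p_{s-1}=q_{t-1}$, then $P-v$ and $Q-v$ are internally vertex-disjoint paths, both ending at $p_{s-1}$, with strictly smaller total order, so by induction they merge into a path $R'$ with $e(R')=p_{s-1}$ and $V(R') = (V(P)\cup V(Q))\sm\{v\}$; then $R := R'v$ is the path we want, the arc $p_{s-1}v$ coming from $P$. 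If $p_{s-1}\ne q_{t-1}$, then — using that the conclusion is symmetric in $P$ and $Q$ — I may assume $q_{t-1}p_{s-1}\in A(D)$, so that $Q^* := q_1\cdots q_{t-1}p_{s-1}$ is a path with $e(Q^*)=p_{s-1}$; the pair $Q^*,\,P-v$ is then internally vertex-disjoint with strictly smaller total order, and induction again yields $R'$ with $e(R')=p_{s-1}$ and $V(R') = (V(P)\cup V(Q))\sm\{v\}$, whence $R := R'v$.

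Most of the remaining work is routine bookkeeping: checking that each auxiliary path covers exactly the claimed vertex set and that internal vertex-disjointness is preserved by the two reductions. Two small points need care. First, $Q^*$ above is a genuine path only if $p_{s-1}\notin V(Q)$; since $P$ and $Q$ are internally vertex-disjoint with common endpoint $v$, the only way to have $p_{s-1}\in V(Q)$ is $p_{s-1}=p_1=q_1$, which forces $P = p_1v$ and $V(P)\subseteq V(Q)$, a configuration disposed of at once by $R := Q$. Second — and this is the only place the in-semicompleteness hypothesis is actually used — one needs the two predecessors $p_{s-1}$ and $q_{t-1}$ of $v$ to be adjacent whenever they are distinct, which is exactly the assertion that the in-neighborhood of $v$ is semicomplete. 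I expect this adjacency step, together with keeping the vertex-set bookkeeping consistent through the induction, to be the only real obstacle; the rest is a straightforward induction.
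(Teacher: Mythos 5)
Your proof is correct; note that the paper itself does not prove this statement but imports it from Bang-Jensen and Gutin, so there is no in-paper argument to compare against. Your induction on $|V(P)|+|V(Q)|$, merging at the common end $v$ via the adjacency of the two predecessors $p_{s-1}$ and $q_{t-1}$, is essentially the standard proof from that reference, and you correctly isolate the only delicate points (the degenerate case $p_{s-1}=p_1=q_1$ and the preservation of internal disjointness under the two reductions).
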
 

In-semicomplete digraphs generalize semicomplete digraphs, which in turn generalize \emph{tournaments}.
We refer the reader to the book by Bang-Jensen and Gutin~\cite{Bang-JensenGutin2008} for further information on this class of digraphs. 
We just would like to remark that there are a few results and problems in
literature considering in-semicomplete digraphs, such as Bondy's Conjecture and Laborde, Payan, and Xuong's Conjecture, presented next.
The former states that for every digraph $D$ and every choice of positive integers $\lambda_1$, $\lambda_2$ such that $\lambda(D) = \lambda_1+\lambda_2$, there exists a partition of $D$ into two digraphs $D_1$ and $D_2$ such that $\lambda(D_i) = \lambda_i$, for $i=1,2$.
The latter states that in every digraph there exists a maximal stable set that intersects every longest path.
These conjectures, still open for arbitrary digraphs, were proved for in-semicomplete digraphs by Bang-Jensen \textit{et al.}~\cite{2006-bang-jensen-etal} and Galeana-S\'{a}nchez and G\'{o}mez~\cite{2008-sanchez-gomez}, respectively.
In this paper, we prove Berge's Conjecture and Aharoni-Hartman-Hoffman's Conjecture for in-semicomplete and out-semicomplete digraphs.

\section{Results for Berge's Conjecture}%{{{

Given a path $P$ and a positive integer $k$, if $|P| > k$ then we say $P$ is \textit{$k$-long}, otherwise we say it is \textit{$k$-short}.
For a set $\sP$ of vertex-disjoint paths of a digraph $D$ and a positive integer $k$, we define $e(\sP) = \{e(P) \colon P \in \sP\}$, $\sP^{>k} = \{P \in \sP \colon |P| > k\}$, and $\sP^{\leq k} = \{P \in \sP \colon |P| \leq k\}$.
To simplify notation, given a set $S$ and an element $x$, we denote by $S + x$ the union $S \cup \{x\}$ and by $S - x$ the difference $S \bs \{x\}$.

Next lemma shows that it is possible to convert one path partition $\sP$ into another path partition $\sQ$ whose $k$-norm is either smaller than $\sP$'s or it is the same as $\sP$'s but $e(\sQ)$ is a stable set.

\begin{lemma}\label{lem:partition-stable}
  Let $\sP$ be a path partition of an in-semicomplete digraph $D$ and let $k$ be a positive integer.
  Then there exists a path partition $\sQ$ of $D$ such that one of the following conditions holds:
  \begin{enumerate}[\rm (i)]
  \item $|\sQ|_k < |\sP|_k$ and $e(\sQ) \subset e(\sP)$;
  \item $|\sQ|_k = |\sP|_k$, $e(\sQ) \subseteq e(\sP)$, $e(\sQ)$ is stable, and every partial $k$-coloring of $D$ orthogonal to $\sQ$ is also orthogonal to $\sP$.
  \end{enumerate}
\end{lemma}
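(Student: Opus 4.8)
The plan is to induct on the number of paths $|\sP|$. If $e(\sP)$ is already stable — in particular if $|\sP|=1$, when $e(\sP)$ is a single vertex — take $\sQ=\sP$: then (ii) holds trivially. So assume $e(\sP)$ is not stable, and fix two (necessarily distinct, vertex-disjoint) paths $P,Q\in\sP$ whose last vertices $u=e(P)$ and $v=e(Q)$ are adjacent; swapping $P$ and $Q$ if needed, we may assume $uv\in A(D)$.

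The only surgery I use is a merge of $P$ and $Q$. Since $uv\in A(D)$ and $v\notin V(P)$, extending $P$ by the arc $uv$ gives a path $P'$ with $V(P')=V(P)+v$ and $e(P')=v$. As $P'$ and $Q$ are internally vertex-disjoint and $e(P')=e(Q)=v$, Theorem~\ref{the:path-mergeability} supplies a path $R$ with $V(R)=V(P')\cup V(Q)=V(P)\cup V(Q)$ and $e(R)=v$. Set $\sP'=(\sP-P-Q)+R$. This is a path partition of $D$ with $|\sP'|=|\sP|-1$ and $e(\sP')=e(\sP)-u\subsetneq e(\sP)$ (indeed $u$ ends no path of $\sP'$, while $u=e(P)\in e(\sP)$). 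Since $\min\{a,k\}+\min\{b,k\}\ge\min\{a+b,k\}$ for all positive integers $a,b$, with equality exactly when $a+b\le k$, and $|R|=|P|+|Q|$, we get $|\sP'|_k\le|\sP|_k$.

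If $|P|+|Q|>k$, then this inequality is strict, so $|\sP'|_k<|\sP|_k$ while $e(\sP')\subset e(\sP)$, and $\sQ=\sP'$ satisfies (i). Otherwise $|R|=|P|+|Q|\le k$ and $|\sP'|_k=|\sP|_k$, and I claim every partial $k$-coloring $\sC$ orthogonal to $\sP'$ is orthogonal to $\sP$. Every path of $\sP$ other than $P$ and $Q$ belongs to $\sP'$, so it suffices to check $P$ and $Q$. Orthogonality to $\sP'$ forces $R$ to meet $\min\{|R|,k\}=|R|=|V(R)|$ distinct color classes, so the vertices of $R$ lie in pairwise distinct classes; restricting to $V(P)\subseteq V(R)$ and to $V(Q)\subseteq V(R)$ shows $P$ meets $|P|=\min\{|P|,k\}$ distinct classes and $Q$ meets $|Q|=\min\{|Q|,k\}$ distinct classes. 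Now apply the induction hypothesis to $\sP'$, which has fewer paths, to obtain $\sQ$ satisfying (i) or (ii) with respect to $\sP'$. Since $e(\sP')\subseteq e(\sP)$, $|\sP'|_k=|\sP|_k$, and orthogonality to $\sP'$ implies orthogonality to $\sP$, the same $\sQ$ satisfies the corresponding condition with respect to $\sP$, completing the induction.

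The main obstacle is the equal-norm case: one must be sure the merge does not destroy orthogonality, and this is exactly where $|P|+|Q|\le k$ is used — it forces the coloring to be injective on $V(R)$, so that orthogonality on $R$ is inherited by both $P$ and $Q$. Everything else — the $k$-norm inequality and the transitivity of the implications along the induction — is routine bookkeeping. (One may also avoid the induction and simply iterate the equal-norm merge: each such merge strictly decreases $|\sP|$ while keeping $|\sP|_k$ fixed and $e(\sP)$ shrinking, so the process terminates either at a norm-decreasing merge, giving (i), or at a partition whose set of last vertices is stable, giving (ii).)
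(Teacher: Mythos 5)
Your proof is correct and follows essentially the same route as the paper's: merge two paths of $\sP$ whose endpoints are adjacent via Theorem~\ref{the:path-mergeability}, observe that the $k$-norm strictly drops unless the merged path is $k$-short (your single inequality $\min\{a,k\}+\min\{b,k\}\ge\min\{a+b,k\}$, with equality iff $a+b\le k$, packages the paper's case analysis neatly), note that in the equal-norm case orthogonality is inherited because the coloring must be injective on the merged path, and induct. The only cosmetic difference is the induction parameter ($|\sP|$ rather than the number of $k$-short paths), which changes nothing of substance.
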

\begin{proof}
  If $e(\sP)$ is stable, then $\sQ = \sP$ satisfies case (ii) and the result follows.
  Thus, we may assume that $e(\sP)$ is not stable and, therefore, there exists a pair of vertices $u$ and $v$ in $e(\sP)$ such that $uv \in A(D)$.
  Let $P_1$ and $P_2$ be the paths in $\sP$ such that $e(P_1) = u$ and $e(P_2) = v$.
  By Theorem~\ref{the:path-mergeability}, there exists a path $Q$ in $D$ such that $V(Q) = V(P_1) \cup V(P_2)$ and $e(Q) = v$.
  Let $\sQ$ be the path partition of $D$ defined as $\sP - P_1 - P_2 + Q$.
  Note that $e(\sQ) \subset e(\sP)$.

  Suppose first that at least one of $P_1$ and $P_2$ is a $k$-long path.
  Hence, $Q$ is $k$-long and
  \begin{align}
    |\sQ|_k &= \sum_{P \in \sQ} \Min{|P|, k} = \sum_{P \in \sQ - Q} \Min{|P|, k} + \Min{|Q|, k} = \sum_{P \in \sQ - Q} \Min{|P|, k} + k\nonumber\\
            &< \sum_{P \in \sP - P_1 - P_2} \Min{|P|, k} + \Min{|P_1|, k} + \Min{|P_2|, k} = \sum_{P \in \sP} \Min{|P|, k} = |\sP|_k \nonumber \enspace,
  \end{align}
  where the inequality follows because $\Min{|P_1|, k} + \Min{|P_2|, k}$ is at least $k + 1$, since at least one of $P_1$ or $P_2$ is $k$-long.
  Therefore, case (i) holds and we may assume that there is no arc in $A(D)$ for which one of its endpoints is in $e(\sP^{> k})$ and the other is in $e(\sP)$.
  Hence, we have that $P_1$ and $P_2$ are $k$-short paths.

  Note that $|\sP|_k$ $=$ $\sum_{P \in \sP - P_1 - P_2} \Min{|P|, k}$ $+$ $\Min{|P_1|,k}$ $+$ $\Min{|P_2|,k}$ and $|\sQ|_k$ $=$ $\sum_{P \in \sP - P_1 - P_2} \Min{|P|, k}$ $+$ $\Min{|Q|, k}$.
  If $|Q| > k$, that is, if $Q$ is $k$-long, then $|\sQ|_k < |\sP|_k$ and so $\sQ$ satisfies case (i) of the lemma.

  Thus, we may assume $Q$ is $k$-short and, therefore, $|\sQ|_k = |\sP|_k$.
  The previous argument shows that if there exists an arc of $D$ connecting two vertices of $e(\sP)$, then both of them must be ends of $k$-short paths of $\sP$.
  From $\sQ$ we show how to find a new path partition of $D$ which satisfies either (i) or (ii).

  First suppose that there exists a partial $k$-coloring $\sC$ orthogonal to $\sQ$ and let $P$ be a path in $\sQ - Q = \sP - P_1 - P_2$.
  Hence, $P$ meets $\Min{|P|, k}$ color classes of $\sC$, since $\sC$ is orthogonal to $\sQ$.
  In order to prove that $\sC$ is also orthogonal to $\sP$, it remains to show that $P_i$ meets $\Min{|P_i|, k} = |P_i|$ color classes for $i \in \set{1, 2}$.
  Since $Q$ is $k$-short, we know that each of its vertices meets a different color class of $\sC$.
  Therefore, every vertex in $P_i$, for $i \in \{1, 2\}$, also meets a distinct color class of $\sC$, after all $V(Q) = V(P_1) \cup V(P_2)$, and so $\sC$ is indeed orthogonal to $\sP$.
  So, we assume there exists no partial $k$-coloring orthogonal to $\sQ$.

  If $e(\sQ^{\leq k})$ is stable, then $e(\sQ)$ is stable and case (ii) holds.
  Thus, we may assume that $e(\sQ^{\leq k})$ is not stable and the remaining proof is by induction on the number $\ell$ of $k$-short paths in $\sP$.

  Since $e(\sP^{\leq k})$ is not stable, we have $\ell \geq 2$.
  If $\ell = 2$, then $\sP^{\leq k} = \{P_1, P_2\}$.
  Therefore, $e(\sQ^{\leq k}) = e(Q)$ and so $\sQ$ satisfies case (ii).
  Now suppose $\ell > 2$.
  By the induction hypothesis applied to $\sQ$, there exists a path partition $\sQ'$ of $D$ such that case (i) or (ii) holds.
  If (i) holds, then $e(\sQ') \subset e(\sQ)$ and $|\sQ'|_k < |\sQ|_k$.
  By construction of $\sQ$, this also means that $e(\sQ') \subset e(\sP)$ and $|\sQ'|_k < |\sP|_k$, and so case (i) holds for $\sP$.
  Otherwise (ii) holds for $\sQ'$, that is, $|\sQ'|_k = |\sQ|_k = |\sP|_k$, $e(\sQ') \subseteq e(\sQ) \subset e(\sP)$, $e(\sQ')$ is stable, and every partial $k$-coloring orthogonal to $\sQ'$ is also orthogonal to $\sQ$ and, therefore, to $\sP$.
  Thus, (ii) holds for $\sP$.
\end{proof}

Given a path $P = v_1v_2 \ldots v_\ell$, we write $v_iP = v_i v_{i + 1} \ldots v_\ell$, $Pv_j = v_1v_2 \ldots v_j$, and $v_iPv_j = v_i v_{i + 1} \ldots v_j$ to denote the appropriate subpath of $P$.
Also, using the definitions of $\sP^{>k}$ and $\sP^{\leq k}$ given above, note that the $k$-norm of a path partition $\sP$ can equivalently be defined as $k|\sP^{>k}| + |V(\sP^{\leq k})|$.

Next theorem is the first main result of this paper.
It shows that any path partition of an in-semicomplete digraph either has a partial $k$-coloring orthogonal to it or can be turned into a new path partition with smaller $k$-norm.
Corollaries~\ref{co:berge-in-semicomplete} and \ref{co:berge-out-semicomplete} state the meaning of such result for Berge's Conjecture.

\begin{theorem}
\label{th:bondy-in-semicomplete}
  Let $D$ be an in-semicomplete digraph, let $k$ be a positive integer, and let $\sP$ be a path partition of $D$.
  Then there exists
  \begin{enumerate}[\rm (i)]
  \item a partial $k$-coloring of $D$ orthogonal to $\sP$; or
  \item a path partition $\sQ$ of $D$ such that $|\sQ|_k < |\sP|_k$  and $e(\sQ) \subset e(\sP)$.
  \end{enumerate}
\end{theorem}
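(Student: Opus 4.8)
The plan is to apply Lemma~\ref{lem:partition-stable} to reduce to the case where $e(\sP)$ is stable, and then to induct on $k$, splitting off the colour class $e(\sP)$ and recursing with $k-1$ on $D$ with its path-ends removed. First apply Lemma~\ref{lem:partition-stable} to $\sP$. If its conclusion (i) holds, the resulting partition is precisely what conclusion (ii) of the theorem asks for and we are done. Otherwise conclusion (ii) of the lemma gives a path partition $\sP_0$ with $e(\sP_0)$ stable, $|\sP_0|_k=|\sP|_k$, $e(\sP_0)\subseteq e(\sP)$, and such that every partial $k$-colouring orthogonal to $\sP_0$ is orthogonal to $\sP$; moreover any path partition $\sR$ with $|\sR|_k<|\sP_0|_k$ and $e(\sR)\subset e(\sP_0)$ then satisfies $|\sR|_k<|\sP|_k$ and $e(\sR)\subset e(\sP)$ as well. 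So, after replacing $\sP$ by $\sP_0$, we may assume $e(\sP)$ is stable. For $k=1$ the theorem then holds at once: the single stable set $e(\sP)$ meets every path of $\sP$ at its end, so it is a partial $1$-colouring orthogonal to $\sP$.

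For the inductive step ($k\ge 2$), put $C_k:=e(\sP)$, let $D':=D-e(\sP)$ (an in-semicomplete digraph, being an induced subdigraph of $D$), and let $\sP'$ be the path partition of $D'$ obtained from $\sP$ by deleting the last vertex of every nontrivial path and discarding the trivial paths; note $V(\sP')=V(D')$ and that a short computation yields the identity $|\sP|_k=|\sP'|_{k-1}+|\sP|$. Apply the induction hypothesis to $D'$, $k-1$ and $\sP'$. If it produces a partial $(k-1)$-colouring $\sC'$ of $D'$ orthogonal to $\sP'$, set $\sC:=\sC'\cup\{e(\sP)\}$; since $e(\sP)$ is stable and the classes of $\sC'$ are stable in $D'$ (hence in $D$) and disjoint from $e(\sP)$, $\sC$ is a partial $k$-colouring of $D$. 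It is orthogonal to $\sP$: a trivial path of $\sP$ lies entirely in $C_k$, and for $P$ with $|P|\ge 2$ the truncation of $P$ meets $\Min{|P|-1,k-1}$ classes of $\sC'$ while $e(P)\in C_k$ lies in a further class, so $P$ meets $\Min{|P|-1,k-1}+1=\Min{|P|,k}$ classes. This is conclusion (i).

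Otherwise the induction hypothesis returns a path partition $\sR'$ of $D'$ with $|\sR'|_{k-1}<|\sP'|_{k-1}$ and $e(\sR')\subset e(\sP')$, and we must lift it to a path partition $\sQ$ of $D$ witnessing conclusion (ii); this is the main obstacle. Every end of $\sR'$ is the second-to-last vertex of a unique nontrivial $P\in\sP$, and $v_{|P|-1}\to e(P)$ is an arc of $D$; appending $e(P)$ to each such path of $\sR'$ and covering the remaining vertices of $e(\sP)$ by trivial paths gives a path partition $\sQ_0$ of $D$ with $|\sQ_0|_k=|\sR'|_{k-1}+|\sP|<|\sP'|_{k-1}+|\sP|=|\sP|_k$ by the identity above. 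The trouble is that this naive lift has $e(\sQ_0)=e(\sP)$ rather than a proper subset. To repair this I would use the ``missing'' end: since $e(\sR')\subset e(\sP')$, there is a nontrivial $P^*\in\sP$ whose second-to-last vertex $v^*$ occurs as a non-final (interior) vertex of some path $R^*_0\in\sR'$. Because $e(\sP)$ is stable, every out-neighbour of $e(P^*)$ lies in $V(D')$; combining this with the arc $v^*\to e(P^*)$ and the mergeability of in-semicomplete digraphs (Theorem~\ref{the:path-mergeability}), one can reroute the lift so that $e(P^*)$ becomes an interior vertex of a path of $\sQ$ and no end outside $e(\sP)$ is created, whence $e(\sQ)\subset e(\sP)$; since merging paths never increases the $k$-norm, $|\sQ|_k\le|\sQ_0|_k<|\sP|_k$ is preserved.

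I expect the real work to be in carrying out this rerouting across all configurations --- in particular when the relevant ends are sinks, which forces a further appeal to Theorem~\ref{the:path-mergeability} (or to Lemma~\ref{lem:partition-stable}) to absorb them --- and to require a short case analysis; everything else above is bookkeeping. Once the theorem is established, the corollaries stated for Berge's Conjecture follow by iterating it, replacing $\sP$ by the smaller partition $\sQ$ until a $k$-optimal path partition is reached.
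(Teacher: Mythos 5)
Your overall strategy --- reduce to stable ends via Lemma~\ref{lem:partition-stable}, then induct on $k$ by splitting off the colour class $e(\sP)$, truncating each path, and recursing on $D-e(\sP)$ with $k-1$ --- is exactly the paper's, and your bookkeeping (the identity $|\sP|_k=|\sP'|_{k-1}+|\sP|$, the lift of the $(k-1)$-colouring, the computation $|\sQ_0|_k=|\sR'|_{k-1}+|\sP|<|\sP|_k$) is correct. The one genuine gap is the ``rerouting'' you invoke to upgrade $e(\sQ_0)=e(\sP)$ to a \emph{proper} subset: you do not carry it out, and it cannot be carried out in general. If every vertex of $e(\sP)$ is a sink of $D$, then \emph{every} path partition $\sQ$ satisfies $e(\sQ)\supseteq e(\sP)$, so no witness for conclusion (ii) as literally stated exists. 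Concretely, take $V(D)=\{a_1,a_2,a_3,b_1,b_2\}$, $A(D)=\{a_1a_2,\,a_2a_3,\,a_2b_1,\,b_1b_2\}$ (in-semicomplete, since all in-degrees are at most $1$), $k=2$, and $\sP=\{a_1a_2a_3,\;b_1b_2\}$: here $e(\sP)=\{a_3,b_2\}$ is stable, $\sP$ is not $2$-optimal (the partition $\{a_1a_2b_1b_2,\;a_3\}$ has $2$-norm $3<4$), yet $a_3$ and $b_2$ are sinks, so $e(\sQ)\subsetneq e(\sP)$ is unattainable; the theorem survives here only because the colouring $\{\{a_3,b_2\},\{a_1,b_1\}\}$ happens to be orthogonal to $\sP$. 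So any argument that, in the branch where the recursion returns a cheaper partition, always produces (ii) with strict containment is doomed, and Theorem~\ref{the:path-mergeability} cannot make a sink into an interior vertex.

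The good news is that the strictness you are fighting for is never used, so the right fix is to weaken rather than to reroute. The induction only needs $e(\sR')\subseteq e(\sP')$ --- that is what guarantees every end of $\sR'$ is a second-to-last vertex of a path of $\sP$ and hence extendable --- and the corollaries only need the drop in $k$-norm. With conclusion (ii) restated as $e(\sQ)\subseteq e(\sP)$, your $\sQ_0$ (extend \emph{every} path of $\sR'$ by the corresponding vertex of $e(\sP)$, and make the leftover vertices of $e(\sP)$ trivial paths) already finishes the proof, and the reduction through Lemma~\ref{lem:partition-stable} still composes since containment is transitive. For comparison, the paper extends only the $(k-1)$-long paths of $\sR'$ and keeps the $(k-1)$-short ones ending at second-to-last vertices, so its lift does not even achieve $e(\sR)\subseteq e(\sQ)$; your ``extend everything'' version gives the same $k$-norm and is the cleaner one. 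In short: delete the rerouting paragraph, replace $\subset$ by $\subseteq$ in (ii), and you have a complete proof.
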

\begin{proof}
  By Lemma~\ref{lem:partition-stable}, there exists a path partition $\sQ$ of $D$ such that either (a) $|\sQ|_k < |\sP|_k$ and $e(\sQ) \subset e(\sP)$ or (b) $|\sQ|_k = |\sP|_k$, $e(\sQ) \subseteq e(\sP)$, $e(\sQ)$ is stable, and every partial $k$-coloring orthogonal to $\sQ$ is also orthogonal to $\sP$.
  If (a) holds, then case (ii) holds directly.
  Therefore, we may assume that (b) holds.
  Note that this reduces the problem of proving the result for $\sP$ to the problem of proving it for $\sQ$ and, thus, from now on we can only consider $\sQ$.

  The remaining proof follows by induction on $k$.
  If $k = 1$, then $e(\sQ)$ is a partial $1$-coloring orthogonal to $\sQ$ and (i) holds.
  Otherwise, $k > 1$ and let $D' = D[V(D) \bs  e(\sQ)]$ and $\sQ' = \{Qu_{\ell - 1} \colon Q = u_1u_2 \ldots u_\ell \in \sQ\}$.
  Note that $\sQ'$ is a path partition for $D'$.
  By the induction hypothesis applied to $D'$, $\sQ'$, and $k - 1$, we have that there exists (a) a partial $(k-1)$-coloring $\sC$ of $D'$ orthogonal to $\sQ'$ or (b) a path partition $\sR'$ or $D'$ such that  $|\sR'|_{k - 1} < |\sQ'|_{k - 1}$  and $e(\sR') \subset e(\sQ')$.
  If (a) holds, then $\sC + e(\sQ)$ is a partial $k$-coloring orthogonal to $\sQ$ and case (i) holds.

  So we assume that case (b) holds.
  Let $e(\sQ') = \set{v_1, v_2, \ldots, v_\ell}$ and $u_i \in e(\sQ)$ be the sucessor of $v_i$ in its path in $\sQ$.
  Note that each path of $\sR'$ ends at some $v_i$, so name such path as $R'_i$.
  Let $\sR = \{R'_iv_iu_i \colon R'_i \in \sR'^{> k - 1}\} \cup \sR'^{\leq k - 1} \cup (e(\sQ) \bs \{u_i \colon R'_i \in \sR'^{> k - 1}\})$.
  In other words, $\sR$ is built by the extensions of all $(k-1)$-long paths  of $\sR'$, plus all $(k - 1)$-short paths of $\sR'$, plus all single vertices of $e(\sQ)$ which were not used to extend  the $(k-1)$-long paths of $\sR'$.
  It is easy to see that $\sR$ is a path partition for $D$.

  We know that
  $|\sR'|_{k -1} = (k - 1)|\sR'^{> k - 1}| + |V(\sR'^{\leq k - 1})| = k|\sR'^{> k - 1}| + |V(\sR'^{\leq k - 1})| - |\sR'^{> k - 1}|$, by definition.
  By construction of $\sR$, 
  $|\sR|_k = k |\sR'^{> k - 1}| + |V(\sR'^{\leq k - 1})| + |e(\sQ)| - |\sR'^{> k - 1}| = |\sR'|_{k - 1} + |\sQ|$.
  Also, by construction of $\sQ'$,
  $|\sQ'|_{k - 1} = (k - 1)|\sQ^{> k}| + |V(\sQ^{\leq k})| - |\sQ^{\leq k}| = k |\sQ^{> k}| + |V(\sQ^{\leq k})| - |\sQ^{> k}| - |\sQ^{\leq k}| = |\sQ|_k - |\sQ|$.
  Putting everything together, we have
  $|\sR|_k = |\sR'|_{k - 1} + |\sQ| < |\sQ'|_{k - 1} + |\sQ| = |\sQ|_k - |\sQ| + |\sQ| = |\sQ|_k$
  and, therefore, case (ii) holds for $\sQ$.
\end{proof}

\begin{corollary}
\label{co:berge-in-semicomplete}
  If $\sP$ is a $k$-optimal path partition of an in-semicomplete digraph $D$, then there exists a partial $k$-coloring of $D$ orthogonal to $\sP$.
\end{corollary}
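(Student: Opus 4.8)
The plan is to derive this directly from Theorem~\ref{th:bondy-in-semicomplete}, which already does all the real work. The point is that a $k$-optimal path partition, by definition, minimizes the $k$-norm over all path partitions of $D$, so the ``escape hatch'' alternative (ii) of Theorem~\ref{th:bondy-in-semicomplete} is unavailable.

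Concretely, I would argue as follows. Apply Theorem~\ref{th:bondy-in-semicomplete} to the in-semicomplete digraph $D$, the positive integer $k$, and the path partition $\sP$. It yields one of two outcomes: either (i) there is a partial $k$-coloring of $D$ orthogonal to $\sP$, or (ii) there is a path partition $\sQ$ of $D$ with $|\sQ|_k < |\sP|_k$ (and $e(\sQ) \subset e(\sP)$, though that part is not needed here). Suppose outcome (ii) holds. Then $\sQ$ is a path partition of $D$ whose $k$-norm is strictly smaller than $|\sP|_k = \pi_k(D)$, contradicting the fact that $\pi_k(D)$ is the minimum $k$-norm among all path partitions of $D$ and hence the $k$-optimality of $\sP$. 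Therefore outcome (i) must hold, which is exactly the assertion of the corollary.

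There is essentially no obstacle: the corollary is a one-line consequence of the theorem together with the definition of $k$-optimality. The only thing to be careful about is to invoke Theorem~\ref{th:bondy-in-semicomplete} with precisely the hypotheses it requires (namely that $D$ is in-semicomplete, which is given) and to note that the strict inequality $|\sQ|_k < |\sP|_k$ in alternative (ii) is genuinely incompatible with $\sP$ attaining $\pi_k(D)$. One may also remark afterwards that, combined with a reversal argument (reversing all arcs turns an out-semicomplete digraph into an in-semicomplete one, preserves path partitions and their $k$-norms, and preserves stable sets and orthogonality), the same statement holds for locally out-semicomplete digraphs, which is presumably the content of the subsequent Corollary~\ref{co:berge-out-semicomplete}.
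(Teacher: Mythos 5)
Your argument is correct and is exactly the intended derivation: the paper states this corollary without proof precisely because alternative (ii) of Theorem~\ref{th:bondy-in-semicomplete} is ruled out by the $k$-optimality of $\sP$, forcing alternative (i). Your closing remark about the reversal argument also matches the paper's proof of Corollary~\ref{co:berge-out-semicomplete}.
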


\begin{corollary}
\label{co:berge-out-semicomplete}
  If $\sP$ is a $k$-optimal path partition of an out-semicomplete digraph $D$, then there exists a partial $k$-coloring of $D$ orthogonal to $\sP$.
\end{corollary}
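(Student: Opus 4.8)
The plan is to reduce the out-semicomplete case to the in-semicomplete case already settled in Corollary~\ref{co:berge-in-semicomplete} by passing to the \emph{converse} digraph $D^{-1}$, which has vertex set $V(D)$ and arc $uv$ precisely when $vu \in A(D)$. First I would check that $D^{-1}$ is in-semicomplete: the in-neighborhood of a vertex $v$ in $D^{-1}$ equals, as a vertex set, the out-neighborhood of $v$ in $D$, and the subdigraph it induces in $D^{-1}$ is the converse of the subdigraph it induces in $D$; since the converse of a semicomplete digraph is again semicomplete (being ``semicomplete'' only depends on which pairs of vertices are adjacent), out-semicompleteness of $D$ gives in-semicompleteness of $D^{-1}$.

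Next I would set up the dictionary between objects of $D$ and objects of $D^{-1}$. Reversing the vertex sequence of a path $P = v_1 \ldots v_\ell$ of $D$ yields a path $P^{-1} = v_\ell \ldots v_1$ of $D^{-1}$ with $V(P^{-1}) = V(P)$ and $|P^{-1}| = |P|$, so this operation induces a bijection between path partitions $\sP$ of $D$ and path partitions $\sP^{-1}$ of $D^{-1}$ that preserves the $k$-norm; in particular $\sP$ is $k$-optimal for $D$ if and only if $\sP^{-1}$ is $k$-optimal for $D^{-1}$. Since adjacency, hence nonadjacency, is unchanged by reversing arcs, a set is stable in $D$ if and only if it is stable in $D^{-1}$, so partial $k$-colorings of $D$ and of $D^{-1}$ are literally the same collections of sets. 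Finally, because $V(P^{-1}) = V(P)$, a partial $k$-coloring $\sC$ is orthogonal to $\sP^{-1}$ in $D^{-1}$ if and only if it is orthogonal to $\sP$ in $D$.

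With this in hand the argument is immediate: given a $k$-optimal path partition $\sP$ of the out-semicomplete digraph $D$, the partition $\sP^{-1}$ is a $k$-optimal path partition of the in-semicomplete digraph $D^{-1}$, so Corollary~\ref{co:berge-in-semicomplete} supplies a partial $k$-coloring $\sC$ of $D^{-1}$ orthogonal to $\sP^{-1}$, and the very same $\sC$ is then a partial $k$-coloring of $D$ orthogonal to $\sP$. I do not expect a real obstacle here: the only thing requiring care is the routine bookkeeping that each notion involved — path partition, $k$-norm, $k$-optimality, stable set, partial $k$-coloring, and orthogonality — is invariant under arc reversal, together with the single slightly non-formal point that ``semicomplete'' is a self-converse property, which is exactly what lets out-semicompleteness pass to in-semicompleteness.
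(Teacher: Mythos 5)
Your proposal is correct and matches the paper's own proof essentially verbatim: the paper likewise passes to the inverse digraph $D^-$, observes it is in-semicomplete with $|\sP|_k = |\sP^-|_k$, applies Corollary~\ref{co:berge-in-semicomplete}, and notes that the resulting partial $k$-coloring is orthogonal to $\sP$. Your additional care in checking that stability and orthogonality are invariant under arc reversal is sound and only makes explicit what the paper leaves implicit.
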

\begin{proof}
  The \emph{inverse} of a digraph (path partition) $B$, denoted by $B^-$, is the digraph (path partition) built from $B$ by inverting its arcs, that is, if $uv \in A(B)$, then $vu \in A(B^-)$.
  If $\sQ$ is a path partition of $D$, then $\sQ^-$ is a path partition of $D^-$ with $|\sQ|_k = |\sQ^-|_k$, and vice-versa.
  Thus, we have that $\sP^-$ is $k$-optimal in $D^-$, and since $D^-$ is an in-semicomplete digraph, by Corollary~\ref{co:berge-in-semicomplete} there exists a partial $k$-coloring $\sC$ orthogonal to $\sP^-$.
  Clearly, $\sC$ is also orthogonal to $\sP$.
\end{proof}

\section{Results for Aharoni-Hartman-Hoffman's Conjecture}%{{{

Recall that, given a $k$-pack $\sP$, we define $e(\sP) = \{e(P) \colon P \in \sP\}$.
Similarly to the result of Lemma~\ref{lem:partition-stable}, the next lemma shows that it is possible to convert one $k$-pack $\sP$ into another $k$-pack $\sQ$ with the same weight such that $e(\sQ)$ is a stable set.

\begin{lemma}
\label{lem:k-path-stable}
  Let $\sP$ be a $k$-pack of an in-semicomplete digraph $D$.
  Then there exists a $k$-pack $\sQ$ of $D$ such that $||\sQ|| = ||\sP||$, $e(\sQ) \subseteq e(\sP)$, and $e(\sQ)$ is stable.
\end{lemma}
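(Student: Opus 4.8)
The plan is to mimic the structure of the proof of Lemma~\ref{lem:partition-stable}, but now in the setting of $k$-packs, where the weight $||\sP||$ plays the role that the $k$-norm played before. Starting from a $k$-pack $\sP$ of the in-semicomplete digraph $D$, if $e(\sP)$ is already stable we are done with $\sQ=\sP$. Otherwise there exist distinct paths $P_1,P_2\in\sP$ with $e(P_1)=u$, $e(P_2)=v$, and $uv\in A(D)$. By Theorem~\ref{the:path-mergeability} there is a path $Q$ with $V(Q)=V(P_1)\cup V(P_2)$ and $e(Q)=v$. Set $\sQ_0=\sP-P_1-P_2+Q$. This merging operation strictly decreases the number of paths (from $|\sP|$ to $|\sP|-1$) while keeping the vertex set covered exactly the same, so $\sQ_0$ is still a $k$-pack (at most $k$ paths, in fact $k-1$ now) with $||\sQ_0||=||\sP||$ and $e(\sQ_0)=e(\sP)-u\subset e(\sP)$.

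The key point is that this is genuinely easier than the path-partition case, because a $k$-pack is only required to have \emph{at most} $k$ paths, so merging two paths into one is always a legal move — there is no $k$-norm accounting to worry about, and no need for the two-way split into cases (i) and (ii). Thus I would simply iterate the merging step: as long as $e$ of the current $k$-pack is not stable, pick an arc inside it and merge the two corresponding paths. Each step strictly decreases $|e(\cdot)|$ (and the number of paths), so the process terminates after finitely many steps, and throughout it the weight is preserved and the set of path-endpoints only shrinks, so it stays a subset of the original $e(\sP)$. When it halts, the final $k$-pack $\sQ$ has $e(\sQ)$ stable, $e(\sQ)\subseteq e(\sP)$, and $||\sQ||=||\sP||$, as required. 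Formally this can be phrased either as an explicit induction on $|e(\sP)|$ (or on $|\sP|$) or as a "choose $\sQ$ minimizing $|e(\cdot)|$ among all $k$-packs of $D$ with $e(\cdot)\subseteq e(\sP)$ and weight $||\sP||$" extremality argument; I would lean toward the induction on $|\sP|$ to match the style of Lemma~\ref{lem:partition-stable}.

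The only thing that needs a moment's care — and it is the closest thing to an obstacle — is confirming that after a merge the object is still a \emph{$k$-pack}, i.e.\ that we have not increased the number of paths and have not broken vertex-disjointness. Both are immediate: $Q$ replaces $P_1$ and $P_2$ by a single path on the same vertex set, $V(Q)=V(P_1)\cup V(P_2)$, so disjointness from the remaining paths of $\sP$ is inherited, and $|\sQ_0|=|\sP|-1\le k$. One also needs $P_1\neq P_2$ so that merging is well-defined and strictly reduces the count; this holds because $u\neq v$ (as $uv\in A(D)$ and $D$ has no loops), hence the endpoints, and therefore the paths, are distinct. With these observations the induction goes through without any real calculation, and the lemma follows.
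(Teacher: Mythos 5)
Your proposal is correct and matches the paper's own argument: both merge two paths whose ends are joined by an arc using Theorem~\ref{the:path-mergeability}, observe that the weight is preserved and the end-set shrinks, and conclude by induction on $|\sP|$. No issues.
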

\begin{proof}
  The proof is by induction on $\ell = |\sP|$.
  If $e(\sP)$ is stable, then $\sQ = \sP$ satisfies the lemma's conclusion and the result follows.
  Thus, we may assume $e(\sP)$ is not stable.
  Let $u$ and $v$ in $e(\sP)$ such that $uv \in A(D)$.
  Let $P_1$ and $P_2$ be the paths in $\sP$ which end in $u$ and $v$, respectively.
  By Theorem~\ref{the:path-mergeability}, there exists a path $Q$ in $D$ such that $V(Q) = V(P_1) \cup V(P_2)$ and $e(Q) = e(P_2)$.
  Let $\sQ$ be the $k$-pack of $D$ defined as $\sP - P_1 - P_2 + Q$.
  By the induction hypothesis, there exists a $k$-pack $\sR$ such that $||\sR|| = ||\sQ||$, $e(\sR) \subseteq e(\sQ)$, and $e(\sR)$ is a stable set.
  By construction, $||\sQ|| = ||\sP||$ and $e(\sQ) \subset e(\sP)$, so the result follows directly.
\end{proof}

Next theorem is another main result of this paper.
It shows that any $k$-pack of an in-semicomplete digraph either has a coloring orthogonal to it or can be turned into a $k$-pack with larger weight.
Corollaries~\ref{co:ahh-in-semicomplete} and \ref{co:ahh-out-semicomplete} state the meaning of such result for Aharoni-Hartman-Hoffman's Conjecture.
To simplify the notation, given a $k$-pack $\sP$ of $D$, we denote by $\Compl{\sP}$ the vertex set $V(D) \bs V(\sP)$.
Recall that, given a path $P = v_1v_2 \ldots v_\ell$, we write $v_iP = v_i v_{i + 1} \ldots v_\ell$, $Pv_j = v_1v_2 \ldots v_j$, and $v_iPv_j = v_i v_{i + 1} \ldots v_j$ to denote the appropriate subpaths.

\begin{theorem}
  Let $D$ be an in-semicomplete digraph, let $k$ be a positive integer, and let $\sP$ be a $k$-pack of $D$.
  Then there exists
  \begin{enumerate}[\rm (i)]
  \item a coloring of $D$ orthogonal to $\sP$; or
  \item a $k$-pack $\sQ$ of $D$ such that $||\sQ|| = ||\sP|| + 1$ and $e(\sQ) \subseteq e(\sP) \cup \Compl{\sP}$.
  \end{enumerate}
\end{theorem}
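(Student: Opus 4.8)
The plan is to run the argument dual to that of Theorem~\ref{th:bondy-in-semicomplete}. First I would use Lemma~\ref{lem:k-path-stable} to replace $\sP$ by a $k$-pack with stable endpoint set: the pack $\sQ$ it produces satisfies $||\sQ|| = ||\sP||$, $V(\sQ) = V(\sP)$ (hence $\Compl{\sQ} = \Compl{\sP}$), $e(\sQ) \subseteq e(\sP)$ and $e(\sQ)$ stable, and since it is obtained only by repeatedly merging two paths into one, a coloring orthogonal to $\sQ$ is orthogonal to $\sP$ — merging two paths of a pack can only increase the number of those paths met by a given color class, and that number never exceeds $k$. So assume $e(\sP)$ is stable. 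Next, two cheap reductions remove the easy configurations. (a) If $\sP$ covers $V(D)$, take a maximal acyclic spanning subdigraph $D'$ of $D$ containing every arc of every path of $\sP$ (legitimate since the union of the vertex-disjoint paths of $\sP$ is acyclic) and color each $v$ by the order of a longest path of $D'$ ending at $v$; the Gallai--Roy argument (cf.\ Theorem~\ref{the:gallai-roy}) shows this is a proper coloring, and along each path of $\sP$ the colors strictly increase, so the coloring is rainbow on every path; hence each color class has at most $|\sP| \le k$ vertices, lying in pairwise distinct paths, so it is orthogonal to $\sP$ and (i) holds. (b) If $\sP$ does not cover $V(D)$ and $|\sP| < k$, adjoin any uncovered vertex as a new trivial path to obtain (ii). So we may assume $|\sP| = k$ and $\Compl{\sP} \ne \emptyset$.

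Now induct on $k$. For $k = 1$ we have $\sP = \{P\}$; build $D'$ and the coloring $f$ as in (a). If $\lambda(D') = |P|$ then, as in (a), $f$ has at most $|P|$ classes and each contains the corresponding vertex of $P$, so (i) holds. If $\lambda(D') > |P|$, take a longest $D'$-path $R$: either its prefix on $|P| + 1$ vertices ends outside $V(P)$, giving (ii) with endpoint in $\Compl{\sP}$; or that prefix ends at some $v_j \in V(P)$, and then concatenating a longest $D'$-path ending at $v_j$ with the subpath $v_j v_{j+1} \cdots v_{|P|}$ of $P$ (these share only $v_j$, by acyclicity of $D'$) yields a $D'$-path of more than $|P|$ vertices ending at $e(P)$, whose suffix on $|P|+1$ vertices gives (ii) with endpoint $e(P) \in e(\sP)$.

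For the step ($k \ge 2$, $|\sP| = k$, $\Compl{\sP} \ne \emptyset$): pick a path $P_0 \in \sP$, put $D' = D[V(D) \sm V(P_0)]$ and $\sP' = \sP - P_0$ (a $(k-1)$-pack of the in-semicomplete digraph $D'$), and apply the theorem inductively to $(D', k-1, \sP')$. If (ii) holds for $\sP'$, i.e.\ there is a $(k-1)$-pack $\sQ'$ of $D'$ with $||\sQ'|| = ||\sP'|| + 1$ and $e(\sQ') \subseteq e(\sP') \cup (V(D') \sm V(\sP'))$, then $\sQ = \sQ' + P_0$ is a $k$-pack of $D$ with $||\sQ|| = ||\sP'|| + 1 + |P_0| = ||\sP|| + 1$ and $e(\sQ) = e(\sQ') \cup \{e(P_0)\} \subseteq e(\sP) \cup \Compl{\sP}$, because $e(\sP') \subseteq e(\sP)$, $V(D') \sm V(\sP') = \Compl{\sP}$ and $e(P_0) \in e(\sP)$; so (ii) holds for $\sP$. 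If instead (i) holds for $\sP'$ — a coloring $\sC'$ of $D'$ orthogonal to $\sP'$ — and no class of $\sC'$ has $\ge k$ vertices, then $\sC' \cup \{\{v\} : v \in V(P_0)\}$ is a coloring of $D$ orthogonal to $\sP$: each singleton meets exactly the path $P_0$, and each $C \in \sC'$ has $|C| \le k-1$ and already meets $\min\{|C|, k-1\} = |C| = \min\{|C|, k\}$ paths of $\sP$ through $\sP'$.

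The one genuinely hard case is when $\sC'$ has a class $C$ with $|C| \ge k$. Then $C$ is forced to meet all $k-1$ paths of $\sP'$ while being disjoint from $V(P_0)$, so it is a large stable set crossing every path of $\sP$ except $P_0$; the goal is to turn this surplus, together with the vertices of $\Compl{\sP}$, into a $k$-pack of weight $||\sP|| + 1$ with endpoints in $e(\sP) \cup \Compl{\sP}$. Here I would first strengthen reduction (b): if some vertex $x$ of $\Compl{\sP}$ is adjacent to an endpoint $e(P_i)$, then either $e(P_i)x \in A(D)$, so $P_i$ extends to a heavier path ending in $\Compl{\sP}$, or $x e(P_i) \in A(D)$, so Theorem~\ref{the:path-mergeability} merges $P_i$ with the arc $x e(P_i)$ into a heavier path ending at $e(P_i)$; either way we land in (ii). After that reduction one has to exploit the in-semicomplete structure of $D$ — a surplus vertex of $C$ (inside some $P_i$ or in $\Compl{\sP}$), an appropriate endpoint of $P_0$, and Theorem~\ref{the:path-mergeability} — to splice out the required longer pack; I expect the main obstacle to be the bookkeeping that keeps the number of paths at most $k$ and the endpoint set inside $e(\sP) \cup \Compl{\sP}$, which will likely force a careful choice of $P_0$ (for instance a longest path of $\sP$). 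Finally, the out-semicomplete version (Corollary~\ref{co:ahh-out-semicomplete}) follows by reversing all arcs, exactly as in Corollary~\ref{co:berge-out-semicomplete}.
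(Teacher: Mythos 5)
Your reductions are sound and your base case $k=1$ works, as do the two easy branches of your inductive step (lifting outcome (ii) from $D - V(P_0)$, and lifting a coloring all of whose classes have fewer than $k$ vertices). But there is a genuine gap exactly where you flag it: the case in which the coloring $\sC'$ of $D - V(P_0)$ orthogonal to $\sP' = \sP - P_0$ has a class $C$ with $|C| \ge k$. Such a class is disjoint from $V(P_0)$, so it meets only the $k-1$ paths of $\sP'$ while orthogonality to $\sP$ would require it to meet $\min\{|C|,k\} = k$ paths; hence $\sC'$ does not lift, and you must instead manufacture a $k$-pack of weight $||\sP||+1$ from the data ``large stable set avoiding $P_0$'' together with ``no adjacency between $\Compl{\sP}$ and $e(\sP)$.'' You give no argument for this step, only the expectation that Theorem~\ref{the:path-mergeability} and a careful choice of $P_0$ will handle the bookkeeping. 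This is not bookkeeping: it is the combinatorial core of the theorem, and nothing in your setup lets you control which coloring the induction hands you, so the case cannot be dodged.

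The paper avoids this trap by inducting on $|\Compl{\sP}|$ rather than on $k$. After establishing (as you also do) that $e(\sP)+v$ is stable for every $v \in \Compl{\sP}$, it deletes the layer $e(\sP) \cup S$, where $S$ is a \emph{maximum} stable set of $D[\Compl{\sP}]$, and truncates each path of $\sP$ by one vertex. In the coloring branch the deleted layer is itself a stable class of size $k + |S| \ge k$ meeting all $k$ paths, so it lifts back with no size restriction — precisely the configuration your induction cannot absorb. In the pack branch, the maximality of $S$ feeds a Hall's-theorem argument matching those new path-ends that lie in $\Compl{\sP}$ into $S$, while the endpoints $e(\sP)$ re-extend the remaining paths, restoring a $k$-pack of weight $||\sP||+1$ with ends in $e(\sP) \cup \Compl{\sP}$. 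To complete your proof you would essentially have to reproduce this layer-peeling argument inside your hard case, so I recommend switching to the paper's induction parameter and decomposition.
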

\begin{proof}
  The proof is by induction on $|\Compl{\sP}|$.
  If $\Compl{\sP} = \varnothing$, then the coloring $\{\{v\} \colon v \in V(D)\}$ is orthogonal to $\sP$ and case (i) holds.
  Thus, we may assume $\Compl{\sP} \neq \varnothing$.
  Let $v$ be a vertex in $\Compl{\sP}$ and let $\sQ = \sP + v$.
  If $|\sQ| \leq k$, then $\sQ$ satisfies case (ii) and the result follows.
  Thus, we have $|\sQ| = k + 1$, since $|\sP| \leq k$.
  If $e(\sQ)$ is not stable, then there exist two paths $P_1$ and $P_2$ in $\sQ$ such that $e(P_1)$ and $e(P_2)$ are adjacent and,
  by Theorem~\ref{the:path-mergeability}, there exists a path $Q$ such that $V(Q) = V(P_1) \cup V(P_2)$ and $e(Q) \in \{e(P_1), e(P_2)\}$.
  Therefore, $\sQ - P_1 - P_2 + Q$ is a $k$-pack which satisfies case (ii).
  Since $v$ was chosen arbitrarily from $\Compl{\sP}$, we have that for any $v \in \Compl{\sP}$, $e(\sP) + v$ is stable.
  In particular, $e(\sP)$ is stable.

  Let $S \subseteq \Compl{\sP}$ be a maximum stable set in $D[\Compl{\sP}]$, let $D'$ be the digraph $D[V(D) \bs (e(\sP) \cup S)]$, and let $\sR$ be the $k$-pack of $D'$ defined as $\{Pu_{\ell - 1} \colon P = u_1u_2 \ldots u_\ell \in \sP\}$.
  Note that $||\sR|| = ||\sP|| - k$ and  $\Compl{\sR} \subset \Compl{\sP}$.
  By the induction hypothesis applied to $D'$ and $\sR$, we have that there exists (a) a coloring $\sC$ of $D'$ orthogonal to $\sR$ or (b) a $k$-pack $\sB$ of $D'$ such that $||\sB|| = ||\sR|| + 1$ and $e(\sB) \subseteq e(\sR) \cup \Compl{\sR}$.
  If (a) holds, then $\sC + (e(\sP) \cup S)$ is a coloring orthogonal to $\sP$ and case (i) holds.
  So we may assume that (b) holds.
  We will show that (ii) holds.
  By Lemma~\ref{lem:k-path-stable}, we may assume that $e(\sB)$ is stable.
  Let $\sB = \sB_1 \cup \sB_2$, where $e(\sB_1) \subseteq e(\sR)$ and $e(\sB_2) \subseteq \Compl{\sR}$.

  Let $e(\sR) = \{v_1, v_2, \ldots, v_\ell\}$ and let $u_i \in e(\sP)$ be the successor of $v_i$ in a path of $\sP$.
  Note that every path of $\sB_1$ ends at some $v_i$, so name such path as $B_i$.
  Let $\sQ_1 = \{B_iv_iu_i \colon B_i \in \sB_1\}$, that is, we built $\sQ_1$ by extending all paths of $\sB_1$.
  Note that $||\sQ_1|| = ||\sB_1|| + |\sB_1|$, $|\sQ_1| = |\sB_1|$, and that $e(\sQ_1) \subseteq e(\sP)$.

  Now we will show that there exists a collection of paths in $\Compl{\sP}$ with weight $||\sB_2|| + |\sB_2|$.
  Let $G$ be the bipartite graph with vertex-set $V(G) = e(\sB_2) \cup S$ and edge-set $E(G) = \{uv \colon u \in e(\sB_2), v \in S, \text{ and } u \text{ and } v \text{ are adjacent in } D\}$.
  We claim that there exists a matching in $G$ which covers $e(\sB_2)$.
  Suppose by contradiction that such matching does not exist.
  By Hall's Theorem, there exists $W \subseteq e(\sB_2)$ such that $|W| > N(W)$, where $N(W) = \{u \in V(G) \colon uv \in E(G) \text{ and } v \in W\}$.
  Note that $W$ is stable in $D[\Compl{\sP}]$, since $W \subseteq e(\sB)$, and no vertex in $W$ is adjacent to a vertex in $S \bs N(W)$ in $D$.
  Therefore, we have that $(S \bs N(W)) \cup W$ is stable in $D[\Compl{\sP}]$ greater than $S$, which contradicts the choice of $S$.
  Hence, there exists a matching $M$ in $G$ which covers $e(\sB_2)$.
  For each $u \in e(\sB_2)$, let $M(u)$ be the vertex of $S$ matched to $u$ by $M$.
  Let $\sB_2 = \{B_1, B_2, \ldots, B_p\}$ and let $B_i = w_1w_2 \ldots w_q$ be a path in $\sB_2$.
  By Theorem~\ref{the:path-mergeability}, there exists a path $Q_i$ such that $V(Q_i) = V(B_i) + M(w_q)$ and $e(Q_i) \in \{w_q, M(w_q)\}$.
  Let $\sQ_2 = \{Q_i \colon B_i \in \sB_2\}$.
  Note that $||\sQ_2|| = ||\sB_2|| + |\sB_2|$, $|\sQ_2| = |\sB_2|$, and that $e(\sQ_2) \subseteq \Compl{\sP}$.

  Let $T = (e(\sP) \cup S) \bs (e(\sQ_1) \cup e(\sQ_2))$, that is, the set of vertices in $e(\sP) \cup \Compl{\sP}$ that are not ends of a path in $\sQ_1 \cup \sQ_2$. Thus,
    $|T| = |e(\sP)| + |S| - |\sQ_1| - |\sQ_2|
        = k + |S| - |\sQ_1| - |\sQ_2| \geq k + 1 - |\sQ_1| - |\sQ_2|$.

  Let $U$ be a set of $k - |\sQ_1| - |\sQ_2|$ vertices in $T$.
  We can see $U$ as a set of trivial paths.
  Finally, let $\sQ$ be the $k$-pack of $D$ defined as $\sQ_1 \cup \sQ_2 \cup U$.
  By the previous remarks it is easy to see that $e(\sQ) \subseteq e(\sP) \cup \Compl{\sP}$.
  At last, we have
  \begin{align*}
    ||\sQ|| &= ||\sQ_1|| + ||\sQ_2|| + ||U|| = ||\sB_1|| + |\sB_1| + ||\sB_2|| + |\sB_2| + ||U|| = ||\sB|| + |\sB| + ||U||\\
            &= ||\sB|| + |\sB| + k - |\sQ_1| - |\sQ_2| = ||\sB|| + |\sB| + k - |\sB_1| - |\sB_2| = ||\sB|| + |\sB| + k - |\sB|\\
              &= ||\sR|| + 1 + k  = ||\sP|| - k + 1 + k = ||\sP|| + 1.
  \end{align*}

  Hence, we conclude that (ii) holds and the result follows.
\end{proof}

\begin{corollary}
\label{co:ahh-in-semicomplete}
  If $\sP$ is an optimal $k$-pack of an in-semicomplete digraph $D$, then there exists a coloring of $D$ orthogonal to $\sP$.
\end{corollary}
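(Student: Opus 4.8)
The plan is to obtain Corollary~\ref{co:ahh-in-semicomplete} as an immediate consequence of the theorem just proved above. First I would let $\sP$ be an optimal $k$-pack of the in-semicomplete digraph $D$ and apply that theorem to $D$, $k$, and $\sP$. It yields one of two outcomes: either (i) a coloring of $D$ orthogonal to $\sP$, or (ii) a $k$-pack $\sQ$ of $D$ with $||\sQ|| = ||\sP|| + 1$.

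The key point is that outcome (ii) cannot occur. By definition, an optimal $k$-pack has maximum weight among all $k$-packs of $D$, so $||\sQ|| = ||\sP|| + 1 > ||\sP|| = \lambda_k(D)$ contradicts the optimality of $\sP$. Hence outcome (i) must hold, which is exactly the conclusion of the corollary.

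I do not anticipate any genuine obstacle here: all the substance resides in the preceding theorem, and the only thing to check is that covering one extra vertex really does violate optimality, which is immediate from the definition of $\lambda_k(D)$. For completeness I would also note that the out-semicomplete analogue follows at once by passing to the inverse digraph $D^-$ (as in the proof of Corollary~\ref{co:berge-out-semicomplete}): $\sP^-$ is an optimal $k$-pack of the in-semicomplete digraph $D^-$, a coloring orthogonal to $\sP^-$ is obtained from Corollary~\ref{co:ahh-in-semicomplete}, and that same coloring is orthogonal to $\sP$.
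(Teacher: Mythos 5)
Your proposal is correct and is exactly the intended derivation: the paper states this corollary without explicit proof because, as you observe, case (ii) of the preceding theorem would produce a $k$-pack of weight $||\sP||+1$, contradicting the optimality of $\sP$, so case (i) must hold. Your remark on the out-semicomplete analogue likewise matches the paper's treatment via the inverse digraph.
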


\begin{corollary}
\label{co:ahh-out-semicomplete}
  If $\sP$ is an optimal $k$-pack of an out-semicomplete digraph $D$, then there exists a coloring of $D$ orthogonal to $\sP$.
\end{corollary}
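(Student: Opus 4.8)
The plan is to reduce this to the in-semicomplete case, Corollary~\ref{co:ahh-in-semicomplete}, by passing to the inverse digraph, in exactly the same way that Corollary~\ref{co:berge-out-semicomplete} was deduced from Corollary~\ref{co:berge-in-semicomplete}. Recall that the \emph{inverse} $B^-$ of a digraph or of a $k$-pack $B$ is obtained by reversing every arc. The three facts I would record first are: (1) if $D$ is out-semicomplete then $D^-$ is in-semicomplete, because the out-neighborhood of a vertex $v$ in $D$ is precisely its in-neighborhood in $D^-$, and a vertex set induces a semicomplete digraph in $D$ if and only if it does so in $D^-$; (2) the map $\sQ \mapsto \sQ^-$ is a weight-preserving bijection between the $k$-packs of $D$ and the $k$-packs of $D^-$, since $|\sQ| = |\sQ^-|$ and $V(\sQ) = V(\sQ^-)$; and (3) a set is stable in $D$ if and only if it is stable in $D^-$, so the colorings of $D$ and of $D^-$ coincide.

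Granting these, the argument is short. Since $\sP$ is an optimal $k$-pack of $D$, fact (2) shows $\sP^-$ is an optimal $k$-pack of $D^-$, and by fact (1) the digraph $D^-$ is in-semicomplete, so Corollary~\ref{co:ahh-in-semicomplete} produces a coloring $\sC$ of $D^-$ orthogonal to $\sP^-$. By fact (3), $\sC$ is also a coloring of $D$. Finally, orthogonality between a coloring and a $k$-pack is defined purely in terms of which vertices lie on which path (each color class $C$ meeting $\min\{|C|,k\}$ distinct paths), and $V(P) = V(P^-)$ for every $P \in \sP$; hence $\sC$ is orthogonal to $\sP$ as well, which is the desired conclusion.

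I do not anticipate any genuine obstacle here: this is a routine duality argument, structurally identical to the proof of Corollary~\ref{co:berge-out-semicomplete}. The only point that deserves a moment's care is checking that every notion in play — the semicompleteness condition on neighborhoods, stability, weight, and orthogonality — is invariant under reversing all arcs; this holds because each of them is phrased in terms of vertex sets and of the (symmetric) adjacency relation rather than of arc orientations. As an aside, one could alternatively re-run the inductive proof of the in-semicomplete main theorem with ``in'' replaced by ``out'' everywhere, but the inverse-digraph reduction is cleaner and reuses the work already done.
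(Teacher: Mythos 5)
Your proof is correct and is exactly the intended argument: the paper leaves this corollary without an explicit proof, but it follows from Corollary~\ref{co:ahh-in-semicomplete} by the same inverse-digraph reduction used to derive Corollary~\ref{co:berge-out-semicomplete} from Corollary~\ref{co:berge-in-semicomplete}, which is what you do. Your checks that weight, stability, and orthogonality are invariant under reversing arcs are the right (and only) points needing verification.
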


\bibliographystyle{amsplain}
\bibliography{bib-data/bibliography.bib}

\end{document}